\newcommand{\ol}{\overline}
\newcommand{\ul}{\underline}
\newcommand{\eps}{\varepsilon}
\newcommand{\ba}{\begin{array}}
\newcommand{\ea}{\end{array}}
\newcommand{\be}{\begin{equation}}
\newcommand{\ee}{\end{equation}}
\newcommand{\bea}{\begin{eqnarray}}
\newcommand{\eea}{\end{eqnarray}}
\newcommand{\beaa}{\begin{eqnarray*}}
\newcommand{\eeaa}{\end{eqnarray*}}
\def\dbE{\mathbb{E}}
\def\dbF{\mathbb{F}}
\def\dbH{\mathbb{H}}
\def\dbP{\mathbb{P}}
\def\dbR{\mathbb{R}}
\def\a{\alpha}
\def\b{\beta}
\def\l{\lambda}
\def\o{\omega}
\def\O{\Omega}
\def\cC{{\cal C}}
\def\cE{{\cal E}}
\def\cF{{\cal F}}
\def\cG{{\cal G}}
\def\cK{{\cal K}}
\def\cL{{\cal L}}
\def\cM{{\cal M}}
\def\cN{{\cal N}}
\def\cP{{\cal P}}
\def\cR{{\cal R}}
\def\cU{{\cal U}}
\def\cZ{{\cal Z}}
\def\no{\noindent}
\def\ms{\medskip}
\def\q{\quad}
\def\cd{\cdot}
\DeclareMathOperator{\Tr}{Tr}
\def\qed{ \hfill \vrule width.25cm height.25cm depth0cm\smallskip}
\newcommand{\basa}{\begin{assumption}}
\newcommand{\easa}{\end{assumption}}
\newcommand{\bas}{\begin{assum}}
\newcommand{\eas}{\end{assum}}
\def\limsup{\mathop{\overline{\rm lim}}}
 \def\cd{\cdot}
\def\1{{\bf 1}}
\def\:{\!:\!}
\newtheorem{thm}{Theorem}[section]
\newtheorem{lem}[thm]{Lemma}
\newtheorem{prop}[thm]{Proposition}
\newtheorem{rem}[thm]{Remark}
\newtheorem{assum}[thm]{Assumption}
\DeclareMathOperator*{\argmin}{\arg\!\min}
\DeclareMathOperator*{\argmax}{\arg\!\max}
\begin{document}

\renewcommand {\theequation}{\arabic{section}.\arabic{equation}}
\def\thesection{\arabic{section}}

\numberwithin{equation}{section}
\numberwithin{thm}{section}

	\title{\textbf{\huge Continuous-Time Principal-Agent Problem in Degenerate Systems}}
	\author{Kaitong HU \footnote{CMAP, Ecole Polytechnique, IP Paris, 91128 Palaiseau Cedex, France, kaitong.hu@polytechnique.fr.} \and  
		Zhenjie REN\footnote{Universit\'e Paris-Dauphine, PSL Research University, CNRS, UMR [7534], Ceremade, 75016 Paris, France, ren@ceremade.dauphine.fr.} \and
		Nizar TOUZI \footnote{CMAP, Ecole Polytechnique, IP Paris, 91128 Palaiseau Cedex, France, nizar.touzi@polytechnique.edu.}}
	\date{}
	\maketitle{}
	
\begin{abstract}
	In this paper we present a variational calculus approach to Principal-Agent problem with a lump-sum payment on finite horizon in degenerate stochastic systems, such as filtered partially observed linear systems. Our work extends the existing methodologies in the Principal-Agent literature using dynamic programming and BSDE representation of the contracts in the non-degenerate controlled stochastic systems. We first solve the Principal's problem in an enlarged set of contracts defined by a forward-backward SDE system given by the first order condition of the Agent's problem using variational calculus. Then we use the sufficient condition of the Agent's problem to verify that the optimal contract that we obtain by solving the Principal's problem is indeed implementable (i.e. belonging to the admissible contract set). Importantly we consider the control problem in a weak formulation. Finally, we give explicit solution of the Principal-Agent problem in partially observed linear systems and extend our results to some mean field interacting Agents case. 
\end{abstract}	

\paragraph*{Keywords:}Stochastic Control of non-Markovian systems, Stochastic Maximum Principles, path-dependent Forward-backward SDEs, Principal-Agent Problem, Contract Theory

\paragraph*{MSC:} 60H30, 91A23, 91A35 

\section{Introduction}
Moral hazard is one of the prime risks of systemic instability and inefficiency, as already pointed out by Adam Smith. Finding the optimal contract between two parties - The Principal and the Agent, when the Agent's effort cannot be observed therefore cannot be contracted upon, is a classical moral hazard problem in microeconomics. Applications can be widely found in corporate finance, portfolio management \cite{BD05} and more recently energy transition \cite{AEEH17, APT18}.

While the research on discrete-time models dated further back, the first continuous-time model was proposed in the seminal work of Holmstr{\"o}m and Milgrom \cite{HM87}, in which they study a simplified model with lump-sum payment on finite horizon while assuming the Agent controls only the drift of the state process. They show that the optimal contract should be linear in aggregate output when the Agent has CARA utility functions with a monetary cost of effort. Their work has been extended by Sch\"attler and Sung \cite{SJ93, SJ97}, Sung \cite{Sung95, Sung97}, M\"uller\cite{Muller98, Muller00}, Hellwig and Schmidt \cite{HS02}. Later Williams \cite{Williams09}, Cvitani\'c, Wan and Zhang \cite{CWZ2009} use the stochastic maximum principle and forward-backward stochastic differential equations (abbreviated FBSDE) to characterize the optimal compensation for more general utility functions. 

Many different continuous-time models have been proposed in the past few decades, notably the one introduced by Sannikov in his seminal paper \cite{Sannikov08}. He considered Principal-Agent problem on infinite horizon with continuous payment while allowing Principal to fire or retire the Agent at any time. From a mathematical perspective, his approach sheds light on the contracting relationship in Princpal-Agent's problem using dynamic programming and leads to simple computational procedure to find the optimal contract by, in his case, solving an ordinary differential equation. 

More recently, Sannikov's idea was reinterpreted and extended by Cvitani\'c, Possama\"i and Touzi \cite{Touzi} to a more general set-up with a more direct and easier approach. We shall illustrate their contribution in the following toy model. Denote by $\xi$ the contract paid at the end of a finite time interval $[0,T]$. Let's consider the following optimization for the Agent:
\beaa
\max_\a \dbE\left[\xi(X^\a) - \int_0^T c(\a_t) dt \right],\q\mbox{where}\q dX^\a_t = dW_t -\a_t dt .
\eeaa
The crucial observation in \cite{Touzi} is that both the contract $\xi$ and the Agent's best response $\a^*[\xi]$ can be characterized by the following backward stochastic differential equation (in short, BSDE, for readers not familiar with BSDE we refer to \cite{PP90, EKPQ97}, and in particular to \cite{CZ12} for the applications on the contract theory):
\beaa
dY_t = -c^*(Z_t) dt +Z_t dW_t,\q Y_T =\xi, \q\mbox{where}\q c^*(z) = \max_a \big\{az - c(a) \big\},
\eeaa
namely, $\xi = Y_T $ and $\a^*_t[Z] = \argmax_a \big\{aZ_t - c(a) \big\}$ for all $t\in [0,T]$. This induces a natural (forward) representation of the couple $(\xi, \a^*[\xi])$:
\beaa
\begin{cases}
	\xi = Y_T^{Y_0, Z} := \displaystyle Y_ 0 -\int_0^T c^*(Z_t) dt + \int_0^T Z_t dW_t  \\ 
	\a^*_t[\xi] := \a^*_t[Z] = \displaystyle\argmax_a \big\{aZ_t - c(a)\big\}, \,\, \mbox{ for all $t\in [0,T]$}
\end{cases}
\mbox{ for some } (Y_0,Z),
\eeaa
and this neat representation transforms the once puzzling principal's problem to be a classical stochastic control problem, namely,
\beaa
\max_\xi \dbE\Big[U\left(X^{\a^*[\xi]}_T - \xi\right)\Big] = \max_{Y_0,Z} \dbE\Big[U\left(X^{\a^*[Z]}_T - Y^{Y_0, Z}_T\right)\Big].
\eeaa 
This idea of representation is further applied to study the case where the Principal can hire multiple Agents e.g.\cite{EMP18}, \cite{EP19} or Agent facing multiple Principals e.g. \cite{MR18}, \cite{HRY19} using the formulation of mean field games (as for the mean field game we refer to the seminal paper \cite{LL07} and the recent books \cite{CD1} and \cite{CD2}). 

However, the method relies on an important hypothesis: the stochastic system is non-degenerate. More specifically, at the heart of the dynamic programming approach to Principal-Agent problem lies the BSDE representation of the contract as shown in the above example, which requires the non-degeneracy of the system. 

It is well known that degenerate stochastic systems appear naturally in partially observed systems when we replace the non-observable part of the system by the filtered process, i.e. the conditional law of the unobservable given the observable, see e.g. \cite{Bensoussan}. One of the simplest cases of partial observed system is system with parameter uncertainty. This is studied by Fernandes and Phelan \cite{FP00}, Williams \cite{Williams09b}, then by Prat and Jovanovic \cite{Prat+Jovanovic}, which inspires and motivates this work.  To illustrate the idea, consider the following process
\begin{equation}\label{eq:intro}
	\mathrm{d}B_t = (\mu + \b_t)\mathrm{d}t + \sigma\mathrm{d}W_t,
\end{equation}
where $ \b $ is the control of the Agent and $ \mu $ represents a unknown parameter of the system which is called the time-invariant productivity. The common priors on $ \mu $ are normal with mean $ m_0 $ and variance $ V_0 $. Using Bayes' formula and replace $ \mu $ by the posteriors, which depend on $ B_t $ and the cumulative effort $ A_t := \int_{0}^t \b_s\mathrm{d}s $, we get the following controlled system:
\beaa
	\mathrm{d}B_t &=& \left(\frac{V_t}{V_0}m_0 + \frac{V_t}{\sigma^2}(B_t - A_t) + \b_t\right)\mathrm{d}t + \mathrm{d}I_t,
\eeaa
where $ V_t = \frac{\sigma^2V_0}{\sigma^2 + V_0t} $ is the posterior variance and $ (I_t)_{t\geq0} $ is a $ \dbF^B $-adapted Brownian motion called the innovation process. The partially observed state process \eqref{eq:intro} becomes now a degenerate controlled system with a new degenerate process $ (A_t)_{t\geq0} $. Since the process $ A $ is not observable by the Principal, it cannot be contracted upon. Indeed, to compute the posteriors on $ \mu $ at time $ t $, one needs to know the Agent's control $ \b $ up to time $ t $. The main difficulty is to find a dynamic representation of the contracts but using only the observable part of the system, namely in the above case the process $ B $. 

In this paper we consider general degenerate controlled system in a weak formulation, allowing the drift and the volatility of the filtered process (i.e. degenerate part of the system, denoted throughout the paper by $ X $) be controlled by the Agent. We assume that the observable part of the system (i.e. non-degenerate part of the system, denoted throughout the paper by the process $ B $) is controlled by the Agent only via the drift. We give the first order condition of the Agent's optimal control when he is given a contract. The first order condition is described by a path-dependent FBSDE. Contrary to the FBSDE literature, by path-dependent we mean the coefficients of the FBSDE dependent on the path of the forward process. In the Principal-Agent problem, the dependency mainly comes from the Agent's cost function and the contract. The wellposedness of these kinds of FBSDE is of independent interest and is studied in the accompanying paper \cite{hu2019}. Our strategy is to solve the Principal's problem first over the set of contracts described by the FBSDE given by the first order condition of the Agent's problem. The contracts described by the FBSDE may not necessarily be implementable, i.e. the optimal control of the Agent's problem may not exist. The implementability is then checked by the sufficient condition of the optimality of the Agent's problem.

The main contribution of the paper is to provide a way to solve the Principal-Agent's problem when only part of the state variables can be contracted upon, which is crucial when the system is partially observed. The rest of the paper is organised as follow. In section 2 we state our moral hazard problem in degenerated systems and give the main results of the paper. The proofs of the results will be given in Section 3. In Section 4 we solve the Principal-Agent problem in partially observed linear systems with explicit optimal contract and extend the result to a specific mean-field interacting Agents case. 

\section{Principal-Agent Problem in Degenerate Systems}
\subsection{Preliminaries}
Let $\O:= C\big([0,T], \dbR^d \big)$ be the canonical space and $ B $ the canonical process. Denote by $ \dbF=(\cF^B_t)_{0\leq t\leq T} $ the associated filtration and $ \dbP $ the Wiener measure. Let $ A $ be a compact convex subset of some finite dimensional space. Let
\begin{equation*}
b:[0,T]\times\O\times\mathbb{R}^n\times A\to\dbR^d\text{, }b(\cdot,a)\text{ }\dbF\text{-optional for any }a\in A,
\end{equation*}
\begin{equation*}
\eta:[0,T]\times\O\times\mathbb{R}^n\times A\to\dbR^n\text{, }\eta(\cdot,a)\text{ }\dbF\text{-optional for any }a\in A,
\end{equation*}
\begin{equation*}
\sigma:[0,T]\times\O\times\dbR^n\times A\to\cM_{n,d}(\dbR)\text{, }\sigma(\cdot,a)\text{ }\dbF\text{-optional for any }a\in A,
\end{equation*}
where $ \cM_{n,d}(\dbR) $ denotes the set of $ n\times d $ matrices with real entries. Throughout this paper we shall be studying control problems in a weak formulation. Denote $ \cU $ the set of admissible controls taking values in $ A $. For any $ \a\in\cU $, under the standard global Lipschitz conditions of Assumption \ref{A1} below, let $ X $ be the unique strong solution of the following stochastic differential equation
\begin{equation}\label{eq:X}
\mathrm{d}X_{t} = \eta_t(X_{t},\alpha_t)\mathrm{d}t + \sigma_t(X_t,\a_t)\mathrm{d}B_t,
\end{equation}
for some given initial data $ X_0 $.

Then we can define $ \mathrm{d}\dbP^\a|_{\cF_T} :=  \cE^\a_T\mathrm{d}\dbP|_{\cF_T} $ where
\begin{equation}\label{eq:exponential}
\cE^\a_T := \exp\left( \int_{0}^{T}b_t(X_t,\alpha_t)\cdot\mathrm{d}B_t + \frac12\int_{0}^{T}|b_t(X_t,\alpha_t)|^2\mathrm{d}t \right).
\end{equation}
In the case where the above process is a positive martingale, by Girsanov theorem we can define $ W^\alpha_t := B_t - B_0 - \int_0^tb_s(X_s,\alpha_s)\mathrm{d}s $, which is a $ d $-dimensional Brownian motion under $ \dbP^\alpha $ and which means that the canonical process $ B $ satisfies the following stochastic differential equation
\begin{equation}\label{eq:B}
\mathrm{d}B_t = b_t(X_t,\a_t)\mathrm{d}t + \mathrm{d}W^\a_{t}.
\end{equation}
This defines the family of probability measures on $ \Omega $
\bea
\cP:=\Big\{\dbP^\a\in\cP(\Omega) : \q \a\in \cU \notag	\Big\},
\eea
where $ \cP(\Omega) $ is the set of all probability measures on $ \Omega $.

\subsection{Problem Formulation}
The Agent signs a contract, works for the Principal for a predetermined period $ T $ and receives $ \xi $ at time $ T $. A contract $ \xi $ is a $ \mathcal{F}_{T} $-measurable random variable, which represents the payment the Agent receives at time $ T $. Let $ c:[0,T]\times\Omega\times A\to\dbR $ be a measurable function representing the Agent's cost of effort at time $ t $. Let $ k^A: [0,T]\times\Omega\times A\to\mathbb{R} $ be a bounded $ \dbF $-optional function and define $ \cK^A_{T} := \exp\left(-\int_{0}^{T}k^A_{t}(\a_t)\mathrm{d}t\right) $ representing the Agent's discount factor. The Agent aims at choosing an optimal effort $ \a\in\cU $ to optimize his utility when given a contract $ \xi $ proposed by the Principal:
\begin{equation}\label{eq:agentprob}
V_{A}(\xi) :=\sup_{\a\in \cU} J_A(\a):= \sup_{\a\in \cU}\mathbb{E}^{\a}\left[\cK^A_T\xi - \int_{0}^{T}\cK^A_tc_{t}(\a_t)\mathrm{d}t\right],
\end{equation}
where $\dbE^\a $ denotes the expectation under the probability measure $\dbP^\a\in\cP$ defined previously. For any contract $ \xi $, denote $ \mathcal{M}^{*}(\xi) \subset \cU$ the set of optimal controls of the Agent's problem.

The Principal on the other hand takes benefits from the outcome of the controlled process and pays the Agent accordingly. Also, the Agent's participation is conditioned on having his expected return above his reservation utility $ R $, in other words: $ V_{A}(\xi)\geq R $. In all, the Principal choose a contract among the following set of contracts in order to optimize her own utility:
\begin{equation}\label{admissibleContract}
\Upxi = \left\{ \xi ~~\mbox{$\cF_T$-measurable}:~~ \sup_{\dbP\in \cP}\dbE^{\dbP}\left[ \xi^2 \right]<\infty\text{ and }V_{A}(\xi)\geq R  \right\}.
\end{equation}
We will call the contracts in $ \Upxi $ the implementable contracts. In the case where the Agent has more than one optimal control, we follow the standard convention that the Agent chooses the one that is the best for the Principal. Let $ k^{P}: [0,T]\times\Omega\to\mathbb{R} $ be a bounded $ \dbF $-optional function and define $ \cK^{P}_{T} := \exp\left(-\int_{0}^{T}k^{P}_{t}\mathrm{d}t\right) $, which represents the discount factor of the Principal. She aims at solving the following optimization
\begin{equation*}
V_{P} = \sup_{\xi\in\Upxi}\sup_{\a^{*}\in\mathcal{M}^{*}(\xi)}\mathbb{E}^{\a^{*}}[\cK^{P}_{T}U(B_T - \xi)],
\end{equation*}
where the function $ U:\dbR\to\dbR $ is a given non-decreasing and concave utility function and we use the convention $ \sup\emptyset=-\infty $.

\begin{rem}
	One of our goals is to generalize the setting of the paper \cite{Touzi} by Cvitanic, Possamaï and Touzi, in which they consider the following system
	\begin{equation*}
	X_t = X_0 + \int_{0}^{t}\sigma_r(X,\b_t)(\l_r(X,a_r)\mathrm{d}r + \mathrm{d}W_r)\text{,  }t\in[0,T].
	\end{equation*}
	The drift of the dynamic needs to be in the range of the volatility, which means the system cannot be degenerate.
\end{rem}

\subsection{Motivation}\label{Section2.3}
The main motivation of our formulation is to tackle the Principal-Agent problem in a partially observed linear system. Using the above notation, the canonical process $ B $ represent the observable process, observed by the Agent and the Principal at the same time. The degenerate part of the system described by the process $ X $ represents the filtered process.

More precisely, let $ \nu = (\a, \b)\in\cU $ be an admissible control. Let $ \hat X $ be the unique strong solution of the following linear stochastic differential equation representing the unobservable part of the system:
\begin{equation}\label{eq:Xlinear}
\mathrm{d}\hat{X}_{t} = (\eta(t)\hat{X}_{t} + \alpha_{t})\mathrm{d}t + \sigma(t)\mathrm{d}W_{t}\text{, } \hat{X}_{0} = \mu,
\end{equation} 
where $ \mu $ a unobservable random variable independent of $ W $ assumed to be Gaussian with mean and variance $ m_0 $ and $ V_0 $, respectively. In addition, we assume that the Principal and the Agent both know $ m_0 $ and $ V_0 $. The functions $ \eta $ and $ \sigma $ are deterministic. Furthermore, the observable part of the system is generated by the noisy signal
\begin{equation}\label{eq:Blinear}
\mathrm{d}B_{t} = (h(t)\hat{X}_{t} + \beta_{t})\mathrm{d}t + \mathrm{d}W^\nu_{t},
\end{equation}
where $ W^\nu $ is a Brownian motion independent of $ W $ and $ h $ a deterministic function.

Notice that the couple $ (\hat X,B) $ is Gaussian and therefore so is the conditional distribution $ \cL(X_t|B_t) $, which is characterized by its mean and variance. Denote $ X_t := \dbE^\nu\left[ \hat X_t| \cF^B_t \right] $ and $ V_t := \dbE^\nu\left[ (\hat X_t - X_t)^2|\cF^B_t \right] $ for any given admissible control $ \nu $. The dynamics of the process $ X $ and $ V $ are given by the Kalman-Bucy filter as shown in the following proposition, the proof can be found in \cite[Chapter 2]{Bensoussan}.

\begin{prop}Let $ \nu=(\alpha,\beta) $ be an admissible control process. We have the following filter system:
	\begin{numcases}{}
	\mathrm{d}X_t = \big( \eta(t)X_t + \alpha_t\big)\mathrm{d}t + h(t)V(t)\mathrm{d}I^\nu_t,
	& $ X_0=m_0 $, \label{filterProcess_partial} \\
	\mathrm{d}B_t = \big(h(t)X_t + \beta_t \big)\mathrm{d}t + \mathrm{d}I^\nu_t & $ B_0 = 0 $,\label{observeproc_partial}
	\end{numcases}
	where $V$ is the solution to the ODE: 
	\bea\label{eq:varianceproc_partial}
	\dot V(t) = 2\eta(t)V(t) - h^2(t)V^2(t) + \sigma(t),\q V(0) = V_{0}.
	\eea
	The process $ I^\nu $ defined by
	\begin{equation}\label{eq:innov}
		I^\nu_t := W_t + \int_{0}^{t}h(s)\big(\hat X_s - X_s\big)\mathrm{d}s = B_t - \int_0^t (h(s)X_s + \beta_s)\mathrm{d}s
	\end{equation}
	is a $ \mathbb{F}^B  $-Brownian motion under $ \mathbb{P}^\nu $, and is called the Innovation process.
\end{prop}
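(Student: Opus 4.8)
The statement is the classical Kalman--Bucy filtering theorem for the conditionally linear Gaussian system \eqref{eq:Xlinear}--\eqref{eq:Blinear}, and the plan is to reproduce the \emph{innovations method} (as in \cite[Chapter 2]{Bensoussan}). The structural fact that drives everything is that, once the observable inputs $(\alpha,\beta)$ are fixed, the filtering error is a linear Gaussian process with deterministic coefficients, so that the conditional variance $V_t$ is \emph{deterministic}; this is what makes the filter gain affine in the observations and forces $V$ to solve a closed Riccati equation. I would organise the argument into three blocks: (i) the innovations lemma, (ii) the evolution of the conditional mean $X$, and (iii) the Riccati equation for $V$.

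First I would prove that $I^\nu$ is an $\cF^B$-Brownian motion under $\dbP^\nu$, which also delivers the observation equation \eqref{observeproc_partial} for free, since the latter is merely the defining identity \eqref{eq:innov} rearranged. Using $\mathrm{d}B_t=(h(t)\hat X_t+\beta_t)\mathrm{d}t+\mathrm{d}W^\nu_t$, the innovation admits the signal representation $I^\nu_t = W^\nu_t+\int_0^t h(s)(\hat X_s-X_s)\,\mathrm{d}s$, so $\langle I^\nu\rangle_t=\langle W^\nu\rangle_t=t$. To see that $I^\nu$ is an $\cF^B$-martingale, for $s<t$ I would split $I^\nu_t-I^\nu_s$ into the drift integral and the increment $W^\nu_t-W^\nu_s$: the latter has zero $\cF^B_s$-conditional mean because $W^\nu$ has independent increments relative to the full filtration, while the former vanishes in conditional mean by a conditional Fubini argument and the tower property, since $\dbE^\nu[\hat X_r-X_r\mid\cF^B_r]=0$ for every $r\ge s$ by the very definition $X_r=\dbE^\nu[\hat X_r\mid\cF^B_r]$ together with $\cF^B_s\subset\cF^B_r$. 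L\'evy's characterisation then gives the Brownian property.

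Next I would derive \eqref{filterProcess_partial}. Taking $\dbE^\nu[\,\cdot\mid\cF^B_t]$ in the integrated form of \eqref{eq:Xlinear} and using that $\alpha$ is observable ($\cF^B$-adapted) identifies the finite-variation part of $X$ as $\int_0^t(\eta(s)X_s+\alpha_s)\,\mathrm{d}s$, while the martingale part of $X$ is, by the martingale representation theorem for the $\cF^B$-Brownian motion $I^\nu$, of the form $\int_0^t K_s\,\mathrm{d}I^\nu_s$. To identify the gain $K$ I would use the orthogonality of the filtering error $e_t:=\hat X_t-X_t$ to $L^2(\cF^B_t)$ (equivalently, the Fujisaki--Kallianpur--Kunita equation), which yields $K_t=h(t)\,\mathrm{Var}^\nu(\hat X_t\mid\cF^B_t)=h(t)V_t$; the cross term coming from the state noise drops out because $W\perp W^\nu$, and $\alpha,\beta$ cancel because they are observable. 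This is precisely \eqref{filterProcess_partial}.

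Finally, for \eqref{eq:varianceproc_partial} I would subtract \eqref{filterProcess_partial} from \eqref{eq:Xlinear} and substitute $\mathrm{d}I^\nu_t=h(t)e_t\,\mathrm{d}t+\mathrm{d}W^\nu_t$ to obtain $\mathrm{d}e_t=(\eta(t)-h^2(t)V_t)e_t\,\mathrm{d}t+\sigma(t)\,\mathrm{d}W_t-h(t)V_t\,\mathrm{d}W^\nu_t$; crucially the controls disappear here, so $e$ is a linear Gaussian process with deterministic coefficients. Applying It\^o's formula to $e_t^2$, taking expectations, and using $V_t=\dbE^\nu[e_t^2\mid\cF^B_t]=\dbE^\nu[e_t^2]$ then gives $\dot V(t)=2\eta(t)V(t)-h^2(t)V^2(t)+\sigma(t)$ with $V(0)=V_0$. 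The step I would treat most carefully is the simultaneous identification of the gain and the claim that $V$ is deterministic: the two are mutually dependent, since the gain is expressed through $V_t$ while the argument that $V_t$ is deterministic rests on the error equation built from that gain. The circularity is broken by the Gaussian projection structure of $e$, after which the innovations computation is self-consistent; the remaining technical points, namely the equality of filtrations $\cF^B=\cF^{I^\nu}$ underlying the martingale representation and the integrability needed to interchange expectation with stochastic integration, are handled as in \cite[Chapter 2]{Bensoussan}.
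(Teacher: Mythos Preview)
The paper does not provide its own proof of this proposition; it simply refers the reader to \cite[Chapter 2]{Bensoussan}. Your proposal is a correct and well-organised sketch of the innovations method, which is precisely the approach developed in that reference, so there is nothing to compare: you have essentially supplied the argument the paper chose to outsource.
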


Clearly under the filtered system \eqref{filterProcess_partial}-\eqref{observeproc_partial}, everything is fully observable, but the system itself becomes degenerate. We aim at solving the following Principal-Agent problem with the convention $ \sup\emptyset=-\infty $:
\begin{equation}\label{pb:principal_partial}
V_{P} = \sup_{\xi\in\Upxi}\sup_{\beta^{*}\in\mathcal{M}^{*}(\xi)}\mathbb{E}^{\beta^{*}}[B_T - \xi],
\end{equation}
where $ \beta^*\in\mathcal{M}^{*}(\xi) $ is the optimal response of the Agent given the contract $ \xi $, namely
\begin{equation}\label{pb:agent_partial}
\beta^* = \argmax_{\b\in\cU}\mathbb{E}^{\b}\left[ \xi - \int_{0}^{T}c(t,\b_t)\mathrm{d}t\right].
\end{equation}

Section \ref{Section4.1} below provides an explicit solution for this problem by applying the subsequent results of the Section \ref{MainResult}.

\begin{rem}
	Our model is inspired and motivated by the model proposed in \cite{Prat+Jovanovic}, where the observable process is described by
	\begin{equation*}
	\mathrm{d}B_t = (\mu + \beta_t)\mathrm{d}t + \mathrm{d}W_t,
	\end{equation*} 
	which is actually a special case of our framework, when $ \eta = \sigma = \alpha =0 $. Another difference is that they consider an infinite horizon problem, where the contract is given in the form of continuous salary whereas in our framework, the contract consists in a lump-sum payment at the end of the contract.
\end{rem}

More generally speaking, one can consider the following partially observed system:
\beaa
\mathrm{d}B_t &=& b_t(\hat X_t,\b_t)\mathrm{d}t + \mathrm{d}W_t \\
\mathrm{d}\hat X_t &=& \eta_t(\hat X_t,\a_t)\mathrm{d}t + \sigma_t(\hat X_t,\a_t)\mathrm{d}\tilde{W}_t,
\eeaa
where $ B $ and $ \hat X $ represent as before the observable process and the unobservable process, respectively. One way to transform the system into an equivalent but fully observable system is to replace the unobservable part by its (unormalized) conditional law described by Zakai equation. In this paper, we shall mainly focus on the finite dimensional case.

\begin{rem}
	The moral hazard framework of Principal-Agent problem is also frequently called the second best. The first best corresponds to the case where the Principal and the Agent have the same information. It is typically assumed that the Principal dictates the Agent's actions. Mathematically, the problem becomes a stochastic control problem for a single individual - the Principal. It is well-known that in non-degenerate systems under some mild conditions, the first best coincides with the second best in terms of Principal's value when the Agent's criterion is risk neutral. However, in a partially observed system, the result no longer holds true in general, even in the risk-neutral case. Consider the following example where there is no observable process. The unobservable process follows a SDE:
	\beaa
		\mathrm{d}\hat X_t = \a_t\mathrm{d}t + \mathrm{d}W_t \q \text{with }\hat X_0\sim \cN(m_0, V_0)\text{ independent of }\dbF^W.
	\eeaa
	Clearly, the filtered process is given by $ X_t = m_0 + \int_{0}^{t}\a_s\mathrm{d}s $. The Principal aims at optimizing his utility function:
	\begin{equation*}
		V_{P} = \sup_{\xi\in\Upxi}\sup_{\a^{*}\in\mathcal{M}^{*}(\xi)}\mathbb{E}^{\a^{*}}[\hat X_T - \xi],
	\end{equation*}
	where $ \a^* $ is the optimal response of the Agent given the contract $ \xi $, namely
	\begin{equation*}
		\a^* = \argmax_{\a\in\cU}\mathbb{E}^{\a}\left[ \xi - \frac12\int_{0}^{T}\a^2_t\mathrm{d}t\right].
	\end{equation*}
	In the second best case, the Principal has no information and can therefore only offer constant contracts. Agent has no incentive to work and therefore $ \a\equiv0 $. Taking into account the participation constraint the optimal contract is a constant payment $ R $ at time $ T $ where $ R $ is Agent's reservation value. The Principal's value in this case is $ m_0 - R $. In the first best case, the Principal observes the Agent's action and therefore the filtered process can be contracted upon. For simplicity, let's consider only linear contracts, i.e. $ \xi = cX_T + d $. The Agent's optimal control is $ \a^*\equiv c $ with participation constraint $ \frac{Tc^2}{2} + m_0c \geq R - d $. As for the Principal, for any given linear contract $ \xi $ we have
	\beaa
		\mathbb{E}^{\a^{*}}[\hat X_T - \xi] = \dbE^{\a^*} \left[ \hat X_T - \frac12\int_0^T(\a^*_t)^2\mathrm{d}t \right] - \dbE^{\a^*}\left[ \xi - \frac12\int_0^T(\a^*_t)^2\mathrm{d}t \right] \leq  m_0 + \frac{T}{2} - R.
	\eeaa
	The inequality becomes equality when $ c=1 $ and $ d=R-m_0-\frac{T}2 $, in which case the Principal's value is increased by $ \frac{T}2 $ comparing to the moral hazard case.
\end{rem}

\subsection{Main Results}\label{MainResult}
For presentation simplicity, we assume that the Agent's discount factor $ \cK_t^A $ is equal to $ 1 $ for all $ t\in[0,T] $. All the main results can be generalised without much difficulty to the case with $ \cK^A_{t} = \exp\left(-\int_{0}^{t}k^A_{s}(\a_s)\mathrm{d}s\right) $ where $ k^A: [0,T]\times\Omega\times A\to\mathbb{R} $ is a bounded $ \dbF $-optional function. Throughout this paper, we shall impose the following conditions on these coefficients.

\begin{assum}\label{A1}The maps $ b,\sigma,\eta $ are $ \cC^2 $ in $ x $. Moreover, there exists a constant $ L>0 $ and a modulus of continuity $ \varpi:[0,\infty)\to[0,\infty) $ such that for $ \phi=b,\sigma,\eta,\partial_xb,\partial_x\sigma,\partial_x\eta $, $ \dbP $-almost surely
\begin{equation*}
		|\phi_t(x,a) - \phi_t(x',a')| \leq L|x-x'| + \varpi(|a-a'|)\text{ for all }t\in[0,T] , x,x'\in\dbR^n , a,a'\in A
\end{equation*}
and for $ \phi=b,\sigma,\eta $, $ \dbP $-almost surely
\begin{equation*}
		|\partial_{xx}\phi_t(x,a) - \partial_{xx}\phi_t(x',a')| \leq \varpi(|x-x'| + |a-a'|)\text{ for all }t\in[0,T] , x,x'\in\dbR^n , a,a'\in A.
\end{equation*}
		
\end{assum}

The following assumption ensures that all weak controls $ \dbP^\a\in\cP $ are equivalent and that high order error terms in the variational calculus can be ignored in the non-linear system case.

\begin{assum}\label{A2}
	The Dol\'eans-Dade exponential $ \cE^\a $ defined in \eqref{eq:exponential} is a positive martingale for all $ \a\in\cU $. Furthermore, if the controlled system is not linear, namely not in the form of \eqref{eq:Xlinear}-\eqref{eq:Blinear}, there exists $ p>2 $ such that $ \sup\limits_{\a\in\cU}\dbE[(\cE^\a_T)^p]<\infty $.
\end{assum}

\no We aim to solve the Agent's problem \eqref{eq:agentprob} by variational calculus. In the following, for $ p\geq1 $, $ \mathbb{P}^\a\in\mathcal{P} $ and $ T>0 $, denote
\begin{equation*}
\mathbb{H}^p_T(\mathbb{P}^\a) := \left\{ Z\q\mathbb{F}-adapted:\|Z\|_{\dbH^p_T(\dbP^{\a})} := \mathbb{E}^\a\left[\int_{0}^{T}|Z_t|^p\mathrm{d}t\right]^{\frac1p}<+\infty \right\}.
\end{equation*}

For $ (t,\omega,x,x',a,a',p,q,z)\in[0,T]\times\Omega\times(\dbR^n)^2\times A^2\times\dbR^n\times\cM_{n,d}(\dbR)\times\dbR^d $, define the Hamiltonian:
\begin{equation}\label{eq: hamiltonian}
H_t(x,x',a,a',p,q,z) := p\cdot(\eta_t(x,a) + \sigma_t(x,a)b_t(x',a')) + \Tr[q\sigma^\intercal_t(x,a)] + z\cdot b_t(x,a) - c_t(a).
\end{equation}
Implicitly, at time $ t $ the Hamiltonian may depend on the path of canonical process $ B_{\cdot\wedge t} $ up to time $ t $. In the following, we will use frequently the partial derivative $ \partial_aH $, $ \partial_xH $, which are derivatives with respect to $ x $ and $ a $ but not $ x' $, $ a' $.

\begin{thm}[Necessary Condition]\label{necessaryCondition}
	Assume that Assumption \ref{A1} and \ref{A2} hold true. For a contract $ \xi\in\Upxi $, let $ \a^*\in\mathcal{M}^*(\xi) $. Then the following FBSDE
	\begin{numcases}{}
	\mathrm{d}Y_{t} = c_{t}(\a^*_t)\mathrm{d}t + Z_{t}\mathrm{d}W^{\a^*}_{t} & $ Y_{T}= \xi $, \label{stateequation01}  \\
	\mathrm{d}P_{t} = -\partial_x H_t(X_t,X_t,\a^*_t,\a^*_t,P_t,Q_t,Z_t)\mathrm{d}t + Q_{t}\mathrm{d}W^{\a^*}_{t} & $ P_{T}=0 $,  \label{adjointproc} \\
	\mathrm{d}B_t = \partial_zH_t(X_t,X_t,\a^*_t,\a^*_t,P_t,Q_t,Z_t)\mathrm{d}t + \mathrm{d}W^{\a^*}_t & $ B_0 = 0 $, \\
	\mathrm{d}X_t = \partial_p H_t(X_t,X_t,\a^*_t,\a^*_t,P_t,Q_t,Z_t) \mathrm{d}t + \sigma_t(X_t,\a^*_t)\mathrm{d}W^{\a^*}_{t} & $ X_0=x_0 $, \label{stateequation02}
	\end{numcases}
	where $ \a^* $ verifies
	\bea
	\a^*_t = \argmax_{a\in A}H_t(X_t,X_t,a,\a^*_t,P_t,Q_t,Z_t)\text{ for all }t\in[0,T], \dbP^{\a^*}-a.s. \label{opitmalControlEquation}
	\eea
	has an $\dbF$-adapted solution, denoted by
	\begin{equation*}
	(Y^*,Z^*,P^*,Q^*,X^*) \in \mathbb{H}^2_T(\mathbb{P}^{\a^*})^5.
	\end{equation*}
\end{thm}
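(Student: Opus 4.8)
The plan is to prove this necessary condition as a stochastic maximum principle in the weak formulation, exhibiting a solution of the FBSDE that is built directly from the given optimal control $\alpha^*$. First I would dispose of the equations that are automatic: writing $\partial_zH = b_t(x,a)$ and $\partial_pH = \eta_t(x,a)+\sigma_t(x,a)b_t(x',a')$ and evaluating on the diagonal $x=x'=X_t$, $a=a'=\alpha^*_t$, the third line and \eqref{stateequation02} reduce to the original dynamics \eqref{eq:X}--\eqref{eq:B}, so they hold for the canonical process $B$ and for $X^*:=X^{\alpha^*}$ by construction. The pair $(Y^*,Z^*)$ is obtained from the Agent's continuation value $Y^*_t := \mathbb{E}^{\alpha^*}\big[\xi-\int_t^Tc_s(\alpha^*_s)\mathrm{d}s\mid\cF_t\big]$, whose martingale representation under $\mathbb{P}^{\alpha^*}$ yields $Z^*$ and the BSDE \eqref{stateequation01}; the $\mathbb{H}^2_T(\mathbb{P}^{\alpha^*})$ bound follows from $\mathbb{E}^{\alpha^*}[\xi^2]<\infty$ (guaranteed by \eqref{admissibleContract} since $\mathbb{P}^{\alpha^*}\in\cP$) and the integrability of $c$. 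The adjoint pair $(P^*,Q^*)$ is defined as the solution of \eqref{adjointproc}, which is a linear BSDE whose driver is affine in $(P,Q)$ with coefficients controlled by Assumption \ref{A1}; existence and the $\mathbb{H}^2$ estimate then come from classical linear BSDE theory. The genuine content is to establish \eqref{opitmalControlEquation}.

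For that I would use convex perturbations. Since $A$ is convex, for $\beta\in\cU$ and $\epsilon\in(0,1)$ set $\alpha^\epsilon:=\alpha^*+\epsilon\theta\in\cU$ with $\theta:=\beta-\alpha^*$; optimality of $\alpha^*$ forces $\frac{\mathrm{d}}{\mathrm{d}\epsilon}\big|_{\epsilon=0^+}J_A(\alpha^\epsilon)\le 0$. Working under $\mathbb{P}^{\alpha^*}$ I would write $J_A(\alpha^\epsilon)=\mathbb{E}^{\alpha^*}\big[L^\epsilon_T(\xi-\int_0^Tc_t(\alpha^\epsilon_t)\mathrm{d}t)\big]$ with $L^\epsilon:=\frac{\mathrm{d}\mathbb{P}^{\alpha^\epsilon}}{\mathrm{d}\mathbb{P}^{\alpha^*}}$, and introduce two variational objects: the state variation $\nabla X$, solving the linear SDE obtained by differentiating \eqref{eq:X} in $\epsilon$ (with $B$ frozen, then rewritten through $\mathrm{d}B_t=b_t\mathrm{d}t+\mathrm{d}W^{\alpha^*}_t$), and the density variation $\dot L_t:=\int_0^t(\partial_xb_s\nabla X_s+\partial_ab_s\theta_s)\mathrm{d}W^{\alpha^*}_s$. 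Using the representation $\xi-\int_0^Tc_t\mathrm{d}t=Y^*_0+\int_0^TZ^*_t\mathrm{d}W^{\alpha^*}_t$ from \eqref{stateequation01}, the fact that $Y^*_0$ is deterministic and $\dot L$ is a centred martingale, and Itô isometry to evaluate $\mathbb{E}^{\alpha^*}[\dot L_T\int_0^TZ^*_t\mathrm{d}W^{\alpha^*}_t]$, the Gateaux derivative collapses to $\mathbb{E}^{\alpha^*}\big[\int_0^T\big(Z^*_t\cdot(\partial_xb_t\nabla X_t+\partial_ab_t\theta_t)-\partial_ac_t\cdot\theta_t\big)\mathrm{d}t\big]$.

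The core step is to eliminate $\nabla X$ through the adjoint. Applying Itô's formula to $P^*_t\cdot\nabla X_t$ under $\mathbb{P}^{\alpha^*}$ and using $P^*_T=0$, $\nabla X_0=0$, all drift terms proportional to $\nabla X$ cancel against those generated by $\partial_xH$ in \eqref{adjointproc}; substituting the resulting identity and recognising the leftover $\theta$-terms as the definition of $\partial_aH$ gives $\frac{\mathrm{d}}{\mathrm{d}\epsilon}\big|_{\epsilon=0}J_A(\alpha^\epsilon)=\mathbb{E}^{\alpha^*}\big[\int_0^T\partial_aH_t(X_t,X_t,\alpha^*_t,\alpha^*_t,P^*_t,Q^*_t,Z^*_t)\cdot\theta_t\,\mathrm{d}t\big]\le 0$. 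Letting $\beta$ range over $\cU$ and applying a standard measurable-selection/localisation argument upgrades this to the pointwise inequality $\partial_aH_t(\cdots)\cdot(a-\alpha^*_t)\le 0$ for every $a\in A$, $\mathrm{d}t\otimes\mathrm{d}\mathbb{P}^{\alpha^*}$-a.e., which is precisely the first-order condition characterising $\alpha^*_t$ as the maximiser in \eqref{opitmalControlEquation}.

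I expect the main obstacle to be the rigorous justification of the differentiation, namely that $\epsilon\mapsto J_A(\alpha^\epsilon)$ is differentiable at $0$ with the derivative given by the formal computation above. This is exactly where Assumption \ref{A2} is needed: in the weak formulation the control acts simultaneously through the state $X^{\alpha^\epsilon}$ and through the measure $\mathbb{P}^{\alpha^\epsilon}$, so one must prove $L^p$ estimates ($p>2$) for $\nabla X$, for the difference quotients $\epsilon^{-1}(X^{\alpha^\epsilon}-X^*)$ and $\epsilon^{-1}(L^\epsilon-1)$, and for the second-order remainders, uniformly in $\epsilon$, so as to interchange limit and expectation and discard the $o(\epsilon)$ contributions. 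The degeneracy -- the state $X$ being driven by the very $B$ that carries the change of measure -- couples $\nabla X$ with $\dot L$ and makes these estimates more delicate than in the non-degenerate strong-formulation case, but the $\cC^2$ and Lipschitz bounds of Assumption \ref{A1} together with the integrability of Assumption \ref{A2} should close the argument.
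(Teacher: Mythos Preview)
Your proposal is correct and follows essentially the same route as the paper: construct $(Y^*,Z^*)$ via martingale representation of the Agent's continuation value, define $(P^*,Q^*)$ as the solution of the linear adjoint BSDE, perform a convex perturbation $\alpha^\epsilon=\alpha^*+\epsilon\theta$, and apply It\^o's formula to $P^*_t\cdot\nabla X_t$ to convert the state variation into the pointwise condition on $\partial_a H$, with the $L^p$ remainder estimates handled through Assumptions~\ref{A1}--\ref{A2}. The only difference is organizational: you differentiate the density $L^\epsilon$ at $\epsilon=0$ and use It\^o's isometry on $\dot L_T\int_0^TZ^*_t\,\mathrm{d}W^{\alpha^*}_t$, whereas the paper keeps the factor $\mathcal{E}^\epsilon=\mathrm{d}\mathbb{P}^{\alpha^\epsilon}/\mathrm{d}\mathbb{P}^{\alpha^*}$ throughout, handles the $Z$-term by the Girsanov switch $\mathrm{d}W^{\alpha^*}_t=\mathrm{d}W^{\alpha^\epsilon}_t+(b^\epsilon-b^*)\,\mathrm{d}t$, and lets $\epsilon\to0$ only at the end---this generates an extra $O(\epsilon)$ remainder $\hat{\mathcal{R}}^\epsilon$ that your ordering sidesteps.
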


\begin{rem}
	\begin{enumerate}
		\item In the case where $ \eta=\sigma=0 $ and $ b $ does not depend on $ x $, our formulation is reduced to the framework of Principal-Agent problem studied in \cite{Touzi} in the uncontrolled diffusion case, where the state equations are not degenerate. The FBSDE given in Theorem \ref{necessaryCondition} reduces to the BSDE \eqref{stateequation01}, which is the same representation of the contract under the optimal probability using the dynamic programming approach. To see that, note that the Hamiltonian in this case becomes
		\begin{equation*}
		H_t(a,z) = zb_t(a) - c_t(a)
		\end{equation*}
		and together with the definition of the optimal control given in \eqref{opitmalControlEquation}, the equation \eqref{stateequation01} can be written as
		\beaa
		\mathrm{d}Y_t &=& H_t(\a^*_t,Z_t)\mathrm{d}t + Z_t\mathrm{d}B_t \\
		&=& \max_{a\in A}H_t(a,Z_t)\mathrm{d}t + Z_t\mathrm{d}B_t\text{, }Y_T = \xi,
		\eeaa
		which is the canonical representation of the contract $ \xi $ given in \cite[Definition 3.2]{Touzi}.
		
		\item Since we are working on the weak formulation of stochastic control, it is not surprising that the necessary condition we obtain here does not coincide with the standard stochastic Pontryagin's Maximum Principle (see e.g. \cite{Peng90, YongZhou1999}). Comparing to the standard result, the forward-backward system has an extra backward equation \eqref{stateequation01}. The process $Y$ in \eqref{stateequation01} indeed characterizes the value function. The adjoint process \eqref{adjointproc}  and the forward one \eqref{stateequation02} are similar to those appearing in the standard stochastic maximum principle. 
		
		\item The contract $\xi$ appearing in the terminal condition of \eqref{stateequation01} may be path-dependent. To the best of our knowledge, the well-posedness of such FBSDE  has not yet been studied in the literature. For further details, we refer the readers to the accompanying paper on the path-dependent FBSDEs \cite{hu2019}.
		
		\item Another way to see how to obtain the FBSDE \eqref{stateequation01}-\eqref{stateequation02} is to conduct formally the variational calculus by introducing a new state process $ L $ representing the change of measure, namely
		\begin{equation*}
		\mathrm{d}L_t = b_t(X_t,\a_t)\mathrm{d}B_t
		\end{equation*}
		and rewrite the Agent's problem as following:
		\begin{equation*}
		V_{A} = \sup_{\a\in \cU}\mathbb{E}\left[L_T\xi - \int_{0}^{T}L_tc_{t}(\a_t)\mathrm{d}t\right],
		\end{equation*}
		where the controlled system becomes
		\begin{numcases}{}
		\mathrm{d}X_t = \eta_t(X_t,\a_t)\mathrm{d}t + \sigma_t(X_t,\a_t)\mathrm{d}B_t & $ X_0 = x_0 $, \nonumber \\
		\mathrm{d}L_t = L_tb_t(X_t,\a_t)\mathrm{d}B_t & $ L_0=1 $. \nonumber
		\end{numcases} 
		
		The standard Hamiltonian associated with the above control problem is given by
		\begin{equation}\label{eq: hamiltonian1}
		\hat{H}_t(x,l,a,p,q,z) = \eta_t(x,a)\cdot p + \Tr[q\sigma^\intercal_t(x,a)] + l(zb_t(x,a) - c_t(a)).
		\end{equation}
		
		The adjoint equations associated with $ X $ and $ L $ are given by
		\begin{numcases}
		\mathrm{d}P_t = -\partial_x\hat{H}_t(X_t,L_t,\a_t,P_t,Q_t,Z_t)\mathrm{d}t + Q_t\mathrm{d}B_t & $ P_T=0 $, \label{adjointproc1}\\
		\mathrm{d}Y_t = -\partial_l\hat{H}_t(X_t,L_t,\a_t,P_t,Q_t,Z_t)\mathrm{d}t + Z_t\mathrm{d}B_t & $ Y_T=\xi $ \label{stateequation11}.
		\end{numcases}
		
		The process $ (Y,Z) $ in \eqref{stateequation11} is actually the equation \eqref{stateequation01} and if we define $ \hat{P_t} = \frac{P_t}{L_t} $ and $ \hat{Q_t} = \frac{Q_t}{L_t} - \frac{P_tb_t(X_t,\a_t)}{L_t} $, then the process $ (\hat{P}, \hat{Q}) $ satisfies exactly the equation \eqref{adjointproc}. 
		
		However, there are a few problems in this approach. First of all, since the coefficients of the control system \eqref{adjointproc1}-\eqref{stateequation11} are not Lipschitz, to our best knowledge, one cannot directly apply the stochastic maximum principle. Our method provides a way to overcome this problem. Secondly, the newly introduced state variable $ L $ is not observable, the contract and the optimal control of the Agent should not depend on $ L $. And finally, note that the convexity of the Hamiltonian \eqref{eq: hamiltonian1} can be satisfied in very few situations due to the term $ lzb_t(x,a) $, which makes it tricky to get a useful sufficient condition for the Agent's problem. 
	\end{enumerate}
\end{rem}

Now we give a sufficient condition for the Agent's problem. Define the functional
\beaa
\cG(t,\o,x,x',a,a',p,q,z) &:=& H_t(x,x',a,a',p,q,z) + (x - x')qb_t(x,a) \\ & & + p\Big(\sigma_t(x,a)b_t(x,a) - \sigma_t(x,a)b_t(x',a')
- \sigma_t(x',a')b_t(x,a)\Big) .
\eeaa

\begin{thm}[Sufficient Condition]\label{sufficientCondition}
	Let Assumptions \ref{A1} and \ref{A2} hold true. Let $ (Y^*,Z^*,P^*,Q^*,X^*) $ be a solution to the system \eqref{stateequation01}-\eqref{stateequation02}, where $ \a^* $  satisfies \eqref{opitmalControlEquation}. If $ (x,a)\in\dbR^n\times A\mapsto \cG_t(x,X^*_t,a,\a^*_t,P^*_t,Q^*_t,Z^*_t) $ is concave, $ \alpha^* $ is an optimal control for the problem $ V_A(Y_T^*) $.
\end{thm}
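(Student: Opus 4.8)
The plan is to show that for the candidate control $\a^*$ and any competing admissible control $\a\in\cU$, the difference $J_A(\a^*) - J_A(\a) \geq 0$, by constructing a suitable concavity/duality estimate built on the adjoint processes $(P^*,Q^*)$ and the process $Y^*$. Since we work in the weak formulation, the natural first move is to express both $J_A(\a^*)$ and $J_A(\a)$ under a \emph{common} reference measure. Concretely, I would introduce the change-of-measure density $L$ with $\mathrm{d}L_t = L_t b_t(X_t,\a_t)\,\mathrm{d}B_t$, $L_0=1$, so that $J_A(\a) = \dbE\big[L_T\xi - \int_0^T L_t c_t(\a_t)\,\mathrm{d}t\big]$, and similarly define $L^*$ from the optimal control. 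This rewrites the weak-formulation objective as an expectation under the single Wiener measure $\dbP$, which is what makes a comparison argument tractable; it is exactly the reformulation flagged in the fourth remark following Theorem~\ref{necessaryCondition}.

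Next I would set up the fundamental comparison. Writing $\Delta J := J_A(\a^*) - J_A(\a)$ with $\xi = Y_T^*$, the key algebraic step is to differentiate the product $P_t^* \cdot (X_t - X_t^*)$ (using the adjoint dynamics \eqref{adjointproc} and the forward dynamics \eqref{stateequation02} under the two controls) together with the running-cost and terminal terms, and integrate from $0$ to $T$. Because $P_T^*=0$ and $X_0 = X_0^* = x_0$, the boundary contributions from the adjoint/forward pairing vanish, and after taking expectations the martingale (stochastic-integral) terms drop out under the integrability $(Y^*,Z^*,P^*,Q^*,X^*)\in\dbH^2_T(\dbP^{\a^*})^5$ combined with Assumption~\ref{A2}. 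What remains is a drift comparison that, after collecting the drift of $Y^*$ together with the $P^*$- and $Q^*$-weighted cross terms arising from the Girsanov change between $W^{\a}$ and $W^{\a^*}$, reorganizes \emph{precisely} into the functional $\cG$. That is, the construction of $\cG$ — with its corrective terms $(x-x')qb_t(x,a)$ and the three $p$-weighted volatility products — is designed so that $\Delta J$ equals an expectation of $\cG$-differences, up to the first-order term vanishing by the stationarity condition \eqref{opitmalControlEquation}.

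Once $\Delta J$ is expressed through $\cG$, I would invoke concavity of the map $(x,a)\mapsto \cG_t(x,X_t^*,a,\a_t^*,P_t^*,Q_t^*,Z_t^*)$. Concavity gives the tangent-plane inequality
\[
\cG_t(X_t,X_t^*,\a_t,\a_t^*,\cdots) - \cG_t(X_t^*,X_t^*,\a_t^*,\a_t^*,\cdots)
\leq \partial_x\cG_t\cdot(X_t-X_t^*) + \partial_a\cG_t\cdot(\a_t-\a_t^*),
\]
where the derivatives are evaluated at the optimal point. The $\partial_a$ contribution is killed by the first-order optimality \eqref{opitmalControlEquation} (noting that at the diagonal point $x=x'=X_t^*$, $a'=\a_t^*$ the functionals $\cG$ and $H$ have the same $x$- and $a$-derivatives, since the corrective terms in $\cG$ vanish or have vanishing gradient there), and the $\partial_x\cG$ term is exactly compensated by the adjoint equation \eqref{adjointproc}, since $\partial_x H$ drives $-\mathrm{d}P^*$. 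Assembling these cancellations yields $\Delta J\geq 0$, i.e. $J_A(\a^*)\geq J_A(\a)$ for every $\a\in\cU$, which is the desired optimality.

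The main obstacle I anticipate is the bookkeeping in the second step: verifying that the Girsanov cross terms, the drift of $Y^*$, and the $P^*,Q^*$ pairings genuinely reassemble into $\cG$ and not some other functional. This is where the non-observable auxiliary process $L$ and the weak formulation interact delicately — one must carefully track how $\partial_z H$ (the drift of $B$ under $\dbP^{\a^*}$) and the extra volatility-drift coupling $p\sigma_t(x,a)b_t(x',a')$ enter, because these are the terms that distinguish the degenerate setting from the classical stochastic maximum principle. A secondary technical point is justifying that all local martingales are true martingales so the expectations of the $\mathrm{d}W^{\a^*}$-integrals vanish; here the $L^p$-bound on $\cE^\a_T$ in Assumption~\ref{A2} (for $p>2$ in the nonlinear case) together with the $\dbH^2_T$ integrability should suffice via Hölder's inequality, but the exponents must be checked.
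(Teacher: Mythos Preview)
Your plan is correct and mirrors the paper's own (very compressed) proof: write $J_A(\a)-J_A(\a^*)$ using the forward representation of $\xi=Y^*_T$, insert the vanishing term $P^*_T\Delta X_T$, apply It\^o to $P^*_t\Delta X_t$ under $\dbP^{\a}$, and recognize the resulting integrand as $\cG_t(X_t,X^*_t,\a_t,\a^*_t,\cdots)-\cG_t(X^*_t,X^*_t,\a^*_t,\a^*_t,\cdots)-\partial_x\cG_t|_{\mathrm{diag}}\cdot\Delta X_t$, which is nonpositive by the assumed concavity together with \eqref{opitmalControlEquation}. One small correction to your heuristic: at the diagonal one actually has $\partial_x\cG=\partial_x H+Q^*_t\, b_t(X^*_t,\a^*_t)$, not $\partial_x H$ alone; the extra $Q^*b$ piece is precisely what balances the Girsanov drift appearing in $\mathrm{d}P^*_t$ when you pass from $W^{\a^*}$ to $W^{\a}$, so the cancellation you anticipate still closes.
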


We next focus on the Principal's problem. Based on the necessary and the sufficient condition of the Agent's problem, we consider the following sets of contracts:
\begin{equation*}
\ol{\Upxi} := \left\{  Y_T | Y_0\geq R, Z\in\ol\cZ \right\}\q\mbox{and}\q
\ul{\Upxi} := \left\{  Y_T | Y_0\geq R, Z\in\ul\cZ \right\},
\end{equation*}
where $ Y $ is defined by the equation \eqref{stateequation01} and 
\beaa
&\ol\cZ:=\Big\{Z\in\dbH^2_T(\dbP) :\exists \dbP^\a\in\mathcal{P}~s.t. ~~\mbox{\eqref{adjointproc},\eqref{stateequation02} have an $\dbF$-adapted solution $ (P,Q,X) $ in~$ \mathbb{H}^2(\dbP^\a)^3 $}\Big\},&\\
&\ul\cZ:=\Big\{Z\in \ol\cZ : ~~\mbox{$Q$ satisfies the sufficient condition given in Theorem \ref{sufficientCondition}}\Big\}.&
\eeaa
Clearly we have $\ul\Upxi\subset \Upxi\subset\ol{\Upxi} $. 
Further, we introduce the enlarged optimization of the Principal.
\begin{equation}
\bar{V}_P :=\sup_{Y_0\geq R}\sup_{Z\in\ol\cZ} \bar J_P(Y_0,Z) := \sup_{Y_0\geq R}\sup_{Z\in\ol\cZ}\mathbb{E}^{\a^{*}}\left[\cK^{P}_{T}U(B_T - Y_T)\right].
\end{equation}

\begin{thm}\label{principalProblem}
	Assume that Assumption \ref{A1} and \ref{A2} hold true. Then $ V_P\leq \bar{V}_P $. Moreover, if the control problem $ \bar{V}_P $ has a solution $ Z^* $ which satisfies the sufficient condition given in Theorem \ref{sufficientCondition}, then $ V_P = \bar{V}_P $ and $ \xi = Y^*_T $ is an optimal contract for $ V_P $.
\end{thm}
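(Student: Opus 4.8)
The plan is to prove the two inequalities $V_P\le\bar V_P$ and $V_P\ge\bar V_P$ separately: the first from the necessary condition (Theorem \ref{necessaryCondition}), and the second, under the additional hypothesis, from the sufficient condition (Theorem \ref{sufficientCondition}). The bridge between the two formulations is the observation that, along any solution of the backward equation \eqref{stateequation01} driven by an optimal response $\a^*$, the initial value $Y_0$ coincides with the Agent's optimal value $V_A(\xi)$. Integrating \eqref{stateequation01} from $0$ to $T$ and taking expectation under $\dbP^{\a^*}$, the stochastic integral $\int_0^T Z_t\,\mathrm{d}W^{\a^*}_t$ is a true $\dbP^{\a^*}$-martingale because $Z\in\dbH^2_T(\dbP^{\a^*})$, so that (recalling $\cK^A\equiv1$)
\[
Y_0=\dbE^{\a^*}\Big[\xi-\int_0^T c_t(\a^*_t)\,\mathrm{d}t\Big]=J_A(\a^*)=V_A(\xi),
\]
and in particular the participation constraint $V_A(\xi)\ge R$ is exactly the constraint $Y_0\ge R$ appearing in $\bar V_P$.

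For the upper bound, I would fix an arbitrary implementable contract $\xi\in\Upxi$ and an optimal response $\a^*\in\mathcal{M}^*(\xi)$. Theorem \ref{necessaryCondition} produces an $\dbF$-adapted solution $(Y,Z,P,Q,X)\in\dbH^2_T(\dbP^{\a^*})^5$ of \eqref{stateequation01}--\eqref{stateequation02} with $\a^*$ satisfying \eqref{opitmalControlEquation}; by construction $\xi=Y_T$, equations \eqref{adjointproc} and \eqref{stateequation02} admit an $\dbF$-adapted solution $(P,Q,X)$, hence $Z\in\ol\cZ$, and by the identity above $Y_0=V_A(\xi)\ge R$. Consequently the Principal's payoff for this contract and response equals $\dbE^{\a^*}[\cK^P_T U(B_T-\xi)]=\dbE^{\a^*}[\cK^P_T U(B_T-Y_T)]=\bar J_P(Y_0,Z)\le\bar V_P$. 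Taking the supremum over $\a^*\in\mathcal{M}^*(\xi)$ and then over $\xi\in\Upxi$ yields $V_P\le\bar V_P$.

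For the reverse inequality under the stated hypothesis, let $(Y_0^*,Z^*)$ attain $\bar V_P$ with $Z^*$ satisfying the sufficient condition, so that $Z^*\in\ul\cZ$. Set $\xi^*:=Y^*_T$, the terminal value of \eqref{stateequation01} with data $(Y_0^*,Z^*)$. Theorem \ref{sufficientCondition} guarantees that the associated $\a^*$ is optimal for $V_A(\xi^*)$, so that $\a^*\in\mathcal{M}^*(\xi^*)$ and, by the identity above, $V_A(\xi^*)=Y_0^*\ge R$; together with the square-integrability built into $\ol\cZ$ this shows $\xi^*\in\ul\Upxi\subset\Upxi$. Therefore $\xi^*$ is an admissible contract with an explicit optimal response, and $V_P\ge\dbE^{\a^*}[\cK^P_T U(B_T-\xi^*)]=\bar J_P(Y_0^*,Z^*)=\bar V_P$. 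Combined with the first part this gives $V_P=\bar V_P$, and the resulting chain of equalities shows that $\xi^*=Y^*_T$ attains $V_P$, i.e. it is an optimal contract.

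The main obstacle I anticipate is not the two-inequality scheme, which is essentially forced by the definitions of $\ol\cZ$ and $\ul\cZ$, but the verification that $\xi^*=Y^*_T$ genuinely lies in $\Upxi$. This requires, on the one hand, the true-martingale property of $\int Z\,\mathrm{d}W^{\a^*}$ to legitimise the identity $Y_0=V_A(\xi)$ and thereby transfer the participation constraint, and on the other hand the uniform square-integrability $\sup_{\dbP\in\cP}\dbE^{\dbP}[(\xi^*)^2]<\infty$, which relies on the equivalence of all measures in $\cP$ and the $L^p$-integrability of the densities $\cE^{\a}_T$ furnished by Assumption \ref{A2}. The genuinely substantive inputs, namely existence of the FBSDE solution and optimality of $\a^*$, are supplied by Theorems \ref{necessaryCondition} and \ref{sufficientCondition}, so the present argument reduces largely to a careful bookkeeping of the constraints.
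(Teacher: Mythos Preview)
Your proposal is correct and follows exactly the same two-inequality scheme as the paper's own proof, which relies on the inclusions $\ul\Upxi\subset\Upxi\subset\ol\Upxi$ established via Theorems \ref{necessaryCondition} and \ref{sufficientCondition}. You are in fact more explicit than the paper about the key identity $Y_0=V_A(\xi)$ that transfers the participation constraint, and about the integrability checks needed for $\xi^*\in\Upxi$; the paper's argument is a terse three-line version of yours.
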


\section{Proofs}
Before giving the proof of the necessary condition, we give the following martingale representation theorem which we shall need in a moment.
\begin{lem}[Martingale Representation]\label{EMRT}
	Let $ (M_{t})_{0\leq t\leq T} $ be a stochastic process with decomposition
	\beaa
	\mathrm{d}M_{t} = h_{t}\mathrm{d}t + \mathrm{d}W_{t},
	\eeaa
	where $ h $ is a $ \mathbb{F}^{M} $-adapted process such that $ \dbE\left[\exp\left(\int^{T}_{0}h_{t}\mathrm{d}W_t - \frac12\int_{0}^{T}h_t^2\mathrm{d}t\right)\right]=1 $.
	Then for any $ \mathbb{F}^{M} $ martingale $ \xi=(\xi_{t})_{0\leq t\leq T} $, there exists a $ \mathbb{F}^{M} $-adapted process $ f $ such that 
	\beaa
	\forall t\in[0,T]\text{, }\xi_{t} = \xi_{0} + \int_{0}^{t}f_{s}\mathrm{d}W_{t}\text{,  }\mathbb{P}-a.s.
	\eeaa
\end{lem}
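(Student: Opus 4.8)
The plan is to reduce the claim to the classical It\^o martingale representation theorem, by first passing to the equivalent measure under which $M$ is itself a Brownian motion, and then transporting the representation back to $\dbP$.

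First I would record the structural facts. Since $h$ is $\mathbb{F}^M$-adapted, the identity $W_t = M_t - M_0 - \int_0^t h_s\,\mathrm{d}s$ shows that $W$ is $\mathbb{F}^M$-adapted; being the continuous local martingale part of the $\mathbb{F}^M$-semimartingale $M$, with $\langle W\rangle_t = t$, it is a $(\dbP,\mathbb{F}^M)$-Brownian motion by L\'evy's characterization. The integrability hypothesis is exactly the statement that the Girsanov density removing the drift of $M$ is a true $\dbP$-martingale; hence there is a probability measure $\dbQ\sim\dbP$ on $\cF^M_T$ under which $M - M_0$ is a Brownian motion, with density $\tfrac{\mathrm{d}\dbP}{\mathrm{d}\dbQ}\big|_{\cF^M_t} = D_t := \exp\!\big(\int_0^t h_s\,\mathrm{d}M_s - \tfrac12\int_0^t h_s^2\,\mathrm{d}s\big)$.

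The second step is the representation under $\dbQ$. Because $\mathbb{F}^M$ is by definition the (augmented) filtration generated by $M$, it is precisely the natural filtration of the $\dbQ$-Brownian motion $M - M_0$; the classical It\^o theorem therefore gives the predictable representation property under $\dbQ$, namely that every $(\dbQ,\mathbb{F}^M)$-martingale is a stochastic integral with respect to $M$.

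The last and most delicate step is the transport back to $\dbP$, the subtlety being that a $\dbP$-martingale need not be a $\dbQ$-martingale. Given a $(\dbP,\mathbb{F}^M)$-martingale $\xi$, Bayes' rule makes $N_t := \xi_t D_t$ a $(\dbQ,\mathbb{F}^M)$-martingale, so the previous step yields $N_t = N_0 + \int_0^t \phi_s\,\mathrm{d}M_s$ for some $\mathbb{F}^M$-adapted $\phi$. Writing $\xi = N D^{-1}$ and expanding by It\^o's formula, with $\mathrm{d}D_t = D_t h_t\,\mathrm{d}M_t$ and $\mathrm{d}\langle M\rangle_t = \mathrm{d}t$ under $\dbQ$, the computation collapses to $\mathrm{d}\xi_t = D_t^{-1}(\phi_t - N_t h_t)\,(\mathrm{d}M_t - h_t\,\mathrm{d}t)$; since $\mathrm{d}M_t - h_t\,\mathrm{d}t = \mathrm{d}W_t$, this reads $\mathrm{d}\xi_t = f_t\,\mathrm{d}W_t$ with $f_t := D_t^{-1}(\phi_t - N_t h_t)$, and integrating gives $\xi_t = \xi_0 + \int_0^t f_s\,\mathrm{d}W_s$ $\dbP$-a.s., as claimed. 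The automatic cancellation of the drift in this last line is nothing but the hypothesis that $\xi$ is a $\dbP$-martingale, and the whole computation is the concrete manifestation of the stability of the predictable representation property under an equivalent change of measure, the representing Brownian motion passing from $M - M_0$ under $\dbQ$ to $W$ under $\dbP$. I expect this transport step—keeping careful track of the density process and confirming that the integrator reduces exactly to $W$—to be the main obstacle, the other delicate point being the identification of $\mathbb{F}^M$ with the filtration generated by the Brownian motion $M - M_0$, which is what makes the classical representation theorem applicable.
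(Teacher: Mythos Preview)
Your argument is correct and follows essentially the same route as the paper: change measure via Girsanov so that $M$ becomes a Brownian motion under $\dbQ$, use Bayes' rule to turn the $\dbP$-martingale $\xi$ into the $\dbQ$-martingale $\xi D$ (equivalently $\xi/L$ in the paper's notation, where $L=D^{-1}$), apply the classical representation theorem under $\dbQ$, and transport back by It\^o's formula. The resulting integrand $f_t = D_t^{-1}(\phi_t - N_t h_t)$ coincides with the paper's $L_t k_t - \xi_t h_t$.
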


\begin{proof}
	Let $ (\xi_{t})_{0\leq t\leq T} $ be a $ \dbF^{M} $-martingale. For $ t\in[0,T] $, define
	\begin{equation*}
	L_{t} := e^{-\int_{0}^{t}h_{s}\mathrm{d}W_{s} - \frac{1}{2}\int_{0}^{t}h^{2}_{s}\mathrm{d}s}.
	\end{equation*}
	Clearly $  (L_{t})_{0\leq t\leq T} $ is a positive uniformly integrable martingale. By Girsanov's theorem, denote $ \mathbb{Q} $ the probability under which $ M $ is a Brownian motion, we have $ \left.\frac{\mathrm{d}\mathbb{Q}}{\mathrm{d}\mathbb{P}}\right\vert_{\mathcal{F}^M_{t}} = L_{t} $. By Bayes' rule, $ \frac{\xi}{L} $ is  a $ \mathbb{F}^{M} $ martingale under $ \mathbb{Q} $. Since $ M $ is now a Brownian motion under the probability $ \mathbb{Q} $, by the martingale representation theorem, there exists a $ \mathbb{F}^{M} $-adapted process $ k = (k_{t}, \mathbb{F}^{M}_{t}) $, $ 0\leq t\leq T $, such that $ \forall t\in[0,T] $, $ \frac{\xi_{t}}{L_{t}} = \xi_{0} + \int_{0}^{t}k_{s}\mathrm{d}M_{s} $.
	Finally, by Ito's formula, we have
	\begin{equation*}
	\mathrm{d}\xi_{t} = \mathrm{d}(L_{t}\cdot\frac{\xi_{t}}{L_{t}}) 
	= -\frac{\xi_{t}}{L_{t}}L_{t}h_{t}\mathrm{d}W_{t} + L_{t}k_{t}\mathrm{d}M_{t} - L_{t}h_{t}k_{t}\mathrm{d}t 
	= (L_{t}k_{t} - \xi_{t}h_{t})\mathrm{d}W_{t}.
	\end{equation*}
	\qed
\end{proof}

The following lemma gives an estimate on the error of the first order approximation of the difference between the solution of \eqref{eq:X} given two different control $ \a $ and $ \a' $.
\begin{lem}\label{lem: moment}
	Let $ \a $, $ \a' $ be two control processes and $ X $, $ X' $ be the corresponding solution of \eqref{eq:X}. Denote $ \Delta\a := \a - \a' $ and $ \Delta X := X - X' $. Let $ \Delta\hat{X} $ be the solution of the following linear SDE with initial condition $ \Delta\hat{X}_0 = 0 $:
	\begin{equation}\label{eq: linear SDE}
	\mathrm{d}\Delta\hat{X}_t = \left( \partial_x \eta_t(X'_t, \a'_t)\Delta\hat{X}_t + \partial_a \eta_t(X'_t,\a'_t)\Delta\a_t \right)\mathrm{d}t + \left( \partial_x \sigma_t(X'_t, \a'_t)\Delta\hat{X}_t + \partial_a \sigma_t(X'_t,\a'_t)\Delta\a_t \right)\mathrm{d}B_t.
	\end{equation}
	Then for any $ p\geq2 $, there exists $ K>0 $ depending only on $ p $, $ T $ and the coefficients $ \eta,\sigma $ of the equation such that 
	\begin{equation*}
	\mathbb{E}\left[ \sup_{t\in[0,T]}|\Delta\hat{X}_t - \Delta X_t|^{p} \right]\leq K\mathbb{E}\left[ \int_{0}^{T}|\Delta\a_t|^{2p}\mathrm{d}t \right],
	\end{equation*}
	where the expectation is taken under the Wiener measure.
\end{lem}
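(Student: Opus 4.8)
The plan is to linearise the equation for $\Delta X$ around $(X',\a')$ and to show that $U := \Delta X - \Delta\hat X$ solves a linear SDE whose inhomogeneous terms are quadratic in $(\Delta X,\Delta\a)$; a standard moment estimate then upgrades this to the claimed second-order bound. First I would subtract \eqref{eq: linear SDE} from the equation for $\Delta X = X-X'$ obtained from \eqref{eq:X} and perform a first-order Taylor expansion of $\eta$ and $\sigma$ at $(X'_t,\a'_t)$. This yields $U_0=0$ and
\begin{equation*}
\mathrm{d}U_t = \big(\partial_x\eta_t(X'_t,\a'_t)U_t + I^\eta_t\big)\mathrm{d}t + \big(\partial_x\sigma_t(X'_t,\a'_t)U_t + I^\sigma_t\big)\mathrm{d}B_t,
\end{equation*}
where, for $\phi=\eta,\sigma$,
\begin{equation*}
I^\phi_t := \phi_t(X_t,\a_t) - \phi_t(X'_t,\a'_t) - \partial_x\phi_t(X'_t,\a'_t)\Delta X_t - \partial_a\phi_t(X'_t,\a'_t)\Delta\a_t
\end{equation*}
is the second-order Taylor remainder. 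The coefficients $\partial_x\eta_t(X'_t,\a'_t)$ and $\partial_x\sigma_t(X'_t,\a'_t)$ multiplying $U_t$ are bounded by $L$ by Assumption \ref{A1}.

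Second, I would record the a priori first-order estimate: for every $q\geq2$ there is a constant $C_q$ with
\begin{equation*}
\mathbb{E}\Big[\sup_{t\in[0,T]}|\Delta X_t|^{q}\Big] \leq C_q\,\mathbb{E}\Big[\int_0^T|\Delta\a_t|^{q}\mathrm{d}t\Big].
\end{equation*}
This follows from the Burkholder-Davis-Gundy (BDG) inequality and Gronwall's lemma applied to the equation for $\Delta X$, using that the increments of $\eta$ and $\sigma$ are controlled by $L|\Delta X_t|$ plus a term governed by $|\Delta\a_t|$.

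Third, and this is the main technical step, I would estimate the remainders. Splitting each $I^\phi_t$ into a pure $x$-increment, a mixed increment and a pure $a$-increment and applying Taylor's formula with integral remainder, the $\cC^2$ regularity in $x$ together with the Lipschitz bound on $\partial_x\phi$ and the regularity of the coefficients in $a$ from Assumption \ref{A1} give a pointwise bound of the form
\begin{equation*}
|I^\eta_t| + |I^\sigma_t| \leq C\big(|\Delta X_t|^2 + |\Delta\a_t|^2\big),
\end{equation*}
the cross term $|\Delta X_t|\,|\Delta\a_t|$ being absorbed by Young's inequality. The delicate point is to keep every remainder genuinely quadratic, i.e. to control the mixed and pure $a$-increments at second order rather than merely through the modulus of continuity; this is where I expect the main obstacle to lie.

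Finally, I would apply the standard $L^p$ estimate (BDG plus Gronwall) to the linear SDE for $U$, whose drift and diffusion coefficients are bounded and whose forcing terms are $I^\eta,I^\sigma$, to obtain
\begin{equation*}
\mathbb{E}\Big[\sup_{t\in[0,T]}|U_t|^{p}\Big] \leq C\,\mathbb{E}\Big[\int_0^T\big(|\Delta X_t|^{2p} + |\Delta\a_t|^{2p}\big)\mathrm{d}t\Big].
\end{equation*}
Invoking the a priori estimate with $q=2p$, so that $\mathbb{E}[\int_0^T|\Delta X_t|^{2p}\mathrm{d}t] \leq T\,\mathbb{E}[\sup_t|\Delta X_t|^{2p}]$ is bounded by a constant times $\mathbb{E}[\int_0^T|\Delta\a_t|^{2p}\mathrm{d}t]$, closes the argument; the remaining bookkeeping of the exponent $2p$ and of the various constants is routine.
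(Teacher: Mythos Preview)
Your proposal is correct and follows essentially the same route as the paper: both arguments write the equation for $U=\Delta X-\Delta\hat X$ as a linear SDE in $U$ with forcing given by the second-order Taylor remainders $I^\eta,I^\sigma$, bound those remainders by $C(|\Delta X_t|^2+|\Delta\a_t|^2)$ via Assumption~\ref{A1}, apply BDG plus Gronwall, and close with the first-order estimate on $\Delta X$ at exponent $2p$. Your concern about the quadratic control of the pure-$a$ remainder is well placed, but the paper proceeds in exactly the same way, simply asserting the quadratic bound as a consequence of Assumption~\ref{A1} and Taylor expansion.
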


\begin{proof}
	In this proof, for the simplicity of the notation, we shall always use $ C $ to denote some strictly positive constants, which may vary from line to line. By Assumption \ref{A1} and Taylor expansion, we note that
	\begin{equation*}
	|\eta_t(X_t,\a_t) - \eta_t(X'_t,\a'_t) - \partial_x \eta_t(X'_t, \a'_t)\Delta X_t - \partial_a \eta_t(X'_t,\a'_t)\Delta\a_t| \leq L(|\Delta\a_t|^2 + |\Delta X_t|^2) ,
	\end{equation*}
	\begin{equation*}
	|\sigma_t(X_t,\a_t) - \sigma_t(X'_t,\a'_t) - \partial_x \sigma_t(X'_t, \a'_t)\Delta X_t - \partial_a \sigma_t(X'_t,\a'_t)\Delta\a_t| \leq L(|\Delta\a_t|^2 + |\Delta X_t|^2).
	\end{equation*}
	
	Therefore, by triangle inequality, we get 
	\begin{equation*}
	|\eta_t(X_t,\a_t) - \eta_t(X'_t,\a'_t) - \partial_x \eta_t(X'_t, \a'_t)\Delta \hat{X}_t - \partial_a \eta_t(X'_t,\a'_t)\Delta\a_t| \leq L(|\Delta\a_t|^2 + |\Delta X_t|^2 + |\Delta\hat{X}_t - \Delta X_t|),
	\end{equation*}
	\begin{equation*}
	|\sigma_t(X_t,\a_t) - \sigma_t(X'_t,\a'_t) - \partial_x \sigma_t(X'_t, \a'_t)\Delta \hat{X}_t - \partial_a \sigma_t(X'_t,\a'_t)\Delta\a_t| \leq L(|\Delta\a_t|^2 + |\Delta X_t|^2 + |\Delta\hat{X}_t - \Delta X_t|),
	\end{equation*}
	for some other constant $ L>0 $.
	
	By Burkholder-Davis-Gundy inequality and Jensen inequality, we have 
	\beaa
	\mathbb{E}\left[\sup_{t\in[0,T]}|\Delta\hat{X}_t - \Delta X_t|^p\right] 
	&\leq& C\mathbb{E}\Bigg[\int_{0}^{T} \left(|\Delta\a_t|^{2p} + |\Delta X_t|^{2p} + \sup_{t\in[0,T]}|\Delta\hat{X}_t - \Delta X_t|^{p}\right)\mathrm{d}t \\
	& & + \left(\int_{0}^{T}(|\Delta\a_t|^2 + |\Delta X_t|^2 + |\Delta\hat{X}_t - \Delta X_t|)^2\mathrm{d}t\right)^{p/2}\Bigg] \\
	&\leq& C\mathbb{E}\Bigg[\int_{0}^{T} \left(|\Delta\a_t|^{2p} + |\Delta X_t|^{2p} + \sup_{t\in[0,T]}|\Delta\hat{X}_t - \Delta X_t|^{p}\right)\mathrm{d}t\Bigg].
	\eeaa
	By Grönwall inequality, we get
	\begin{equation}\label{eq: gronwall1}
	\mathbb{E}\left[\sup_{t\in[0,T]}|\Delta\hat{X}_t - \Delta X_t|^p\right] \leq C\exp\left( CT \right)\mathbb{E}\Bigg[\int_{0}^{T} \left(|\Delta\a_t|^{2p} + |\Delta X_t|^{2p}\right)\mathrm{d}t\Bigg].
	\end{equation}
	Now we shall estimate the moments of $ \Delta X $. Again, for all $ p\geq2 $, by Burkholder-Davis-Gundy inequality and Jensen inequality, we have
	\beaa
	\mathbb{E}\left[\sup_{t\in[0,T]}|\Delta X_t|^p\right] 
	&\leq& C\mathbb{E}\Bigg[\int_{0}^{T} \left(|\Delta\a_t|^{p} + |\Delta X_t|^{p}\right)\mathrm{d}t + \left(\int_{0}^{T}(|\Delta\a_t| + |\Delta X_t|)^2\mathrm{d}t\right)^{p/2}\Bigg] \\
	&\leq& C\mathbb{E}\Bigg[\int_{0}^{T} \left(|\Delta\a_t|^{p} + \sup_{t\in[0,T]}|\Delta X_t|^{p}\right)\mathrm{d}t\Bigg].
	\eeaa
	
	By Grönwall inequality, we get
	\begin{equation}\label{eq: gronwall2}
	\mathbb{E}\left[\sup_{t\in[0,T]}|\Delta X_t|^p\right] \leq C\exp\left( CT \right)\mathbb{E}\Bigg[\int_{0}^{T}|\Delta\a_t|^{p}\mathrm{d}t\Bigg].
	\end{equation}
	
	Finally, combing \eqref{eq: gronwall1} and \eqref{eq: gronwall2}, we get
	\begin{equation*}
	\mathbb{E}\left[\sup_{t\in[0,T]}|\Delta\hat{X}_t - \Delta X_t|^p\right] \leq C\mathbb{E}\Bigg[\int_{0}^{T}|\Delta\a_t|^{2p}\mathrm{d}t\Bigg],
	\end{equation*}
	where $ C $ is a constant depending on $ p $, $ T $ and the coefficients of the SDE \eqref{eq:X}.
	\qed
\end{proof}

We shall now give the proof of Theorem \ref{necessaryCondition} in the non-linear system case. Note that in the linear case, $ \Delta X $ and $ \Delta \hat X $ defined in the above lemma are the same, which means that there will not be any high order error terms in the variational calculus. Therefore, the estimation of the error terms in step 5 of the following proof will not be necessary for linear system.

\vspace{5mm}

\no {\bf Proof of Theorem \ref{necessaryCondition}} \q
	For simplicity, we shall only prove the case where $ n=d=1 $. In higher dimensional case, the proof is essentially the same. Let  $ \xi $ be an implementable contract such that there exists an $ \mathbb{F} $-adapted control $ \a^{*} $ which optimizes the Agent's expected value. We denote the corresponding state variable $ X^* $.
	
	\ms
	\no {\it Step 1}.\q Introduce the dynamic version of the value function $ Y_{t} := \mathbb{E}^{\a^*}_t\left[ \xi - \int_{t}^{T} c_{s}(\a^*_s)\mathrm{d}s\right] $. One can check straightforward that $(M_t)_{0\leq t\leq T}:= \left(Y_{t}-\int_{0}^{t} c_{s}(\a^*_s)\mathrm{d}s\right)_{0\leq t\leq T}$
	is an $ \mathbb{F} $-martingale under the probability $ \mathbb{P}^{\a^{*}} $.  By the  martingale representation in Lemma \ref{EMRT}, there exists an $ \mathbb{F} $-adapted process $ Z $, such that for all $ t\in[0,T] $
	\begin{equation}\label{Mdecomp}
	M_t = J_A(\a^*) + \int_{0}^{t}Z_{s}\mathrm{d}W^{\a^{*}}_{s}.
	\end{equation}
	
	By Assumption \ref{A1} and \cite[Theorem 9.3.5]{CZ12}, $ Z\in \dbH^2_T(\dbP^{\a^*}) $.
	
	\no {\it Step 2}. Next we perform a variational calculus around the optimal control $\a^*$. Let $ \a \in 
	\cU$  and for all $ t\in[0,T] $ denote $ \Delta\alpha_t = \a_t - \a^*_t $. Define $ \a^\epsilon_t := \alpha^{*}_{t} + \epsilon\Delta\alpha_{t} $ and denote the corresponding state variable by $ X^{\epsilon} $. Since the control set $ \mathcal{U} $ is assumed to be convex, we have $\a^\epsilon \in \cU$. Denote $ \delta\a_t = \eps\Delta\a_t $, $ \Delta X_{t} = \frac{X^{\eps}_t - X^*_t}{\epsilon} $ and $ \delta X_t = \eps\Delta X_t $. As $ \xi = Y_T $, we have
	\bea
	J_A(\a^\epsilon) &=& \mathbb{E}^{\a^\eps}\left[ \xi - \int_{0}^{T} c_{t}(\a^\eps_t)\mathrm{d}t\right] \nonumber\\
	&=& \mathbb{E}^{\a^\epsilon}\left[J_A(\a^*) + \int_{0}^{T} c_{t}(\a^*_t)\mathrm{d}t + \int_{0}^{T}Z_t\mathrm{d}W^{\a^*}_t - \int_{0}^{T} c_{t}(\a^\epsilon_t)\mathrm{d}t \right] \nonumber\\
	&=& \mathbb{E}^{\a^\epsilon}\left[J_A(\a^*) + \int_{0}^{T}- \delta c_{t}\mathrm{d}t + \int_{0}^{T}Z_t\big(\mathrm{d}W^{\a^\eps}_t + (b_t(X^{\eps}_t,\a^{\eps}_t) - b_t(X^*_t,\a^*_t))\mathrm{d}t\big) \right] \nonumber
	\eea	
	where $ \delta c_t := c_t(\a^\epsilon_t) - c_t(\a^*_t) $. By the definition of the set of admissible contracts \eqref{admissibleContract}, we know that $ \int_{0}^{\cdot}Z_t\mathrm{d}W^{\a^\eps}_t $ is a true martingale under $ \dbP^{\a^\eps} $. Denote $ \cE^{\eps} $ the Radon-Nikodym derivative between $ \dbP^{\a^\eps} $ and $ \dbP^{\a^*} $, namely 
	\begin{equation*}
	\cE^{\eps} := \exp\left(\int_{0}^{T}(b_t(X^\eps_t,\a^\eps_t) - b_t(X^*_t,\a^*_t))\mathrm{d}W^{\a^*}_t - \frac12\int_{0}^{T}(b_t(X^\eps_t,\a^\eps_t) - b_t(X^*_t,\a^*_t))^2\mathrm{d}t\right).
	\end{equation*}
	Note that since $ b $ is bounded, $ \cE^\eps $ is in $ L^p $ for all $ p\geq 1 $. Applying Girsanov theorem and together with Taylor expansion on $ b $, we have
	\beaa
	&  & J_A(\a^\epsilon) - J_A(\a^*) \\
	&= & \mathbb{E}^{\a^*}\left[\cE^\eps\int_{0}^{T}\big(- \delta c_{t} + Z_t(b_t(X^{\eps}_t,\a^{\eps}_t) - b_t(X^*_t,\a^*_t))\big)\mathrm{d}t\right]  \\
	&= & \eps\mathbb{E}^{\a^*}\left[ \cE^\eps\int_{0}^{T}\big(- \partial_a c_{t}(\a^*_t)\Delta \a_t + Z_t(\partial_a b_t(X^{*}_t,\a^{*}_t)\Delta\a_t + \partial_xb_t(X^*_t,\a^*_t)\Delta \hat{X}_t) + \cR^\eps_t/\eps\big)\mathrm{d}t \right],
	\eeaa
	where $ \Delta \hat{X} $ is the solution of the SDE \eqref{eq: linear SDE} with initial condition $ \Delta \hat{X}_0 = 0 $ and 
	\bea
	\cR^\eps_t 
	&:=& - \Big(\delta c_{t}-\partial_a c_{t}(\a^*_t)\delta\alpha_t\Big) + Z_t\Big(b_t(X^{\eps}_t,\a^{\eps}) - b_t(X^*_t,\a^*_t) - \partial_a b_t(X^{*}_t,\a^{*}_t)\delta\a_t + \partial_xb_t(X^*_t,\a^*_t)\delta X_t\Big) \nonumber  \\
	&  & + \eps Z_t\partial_xb_t(X^*_t,\a^*_t)(\Delta X_t - \Delta\hat{X}_t). \label{eq:Reps} 
	\eea 
	
	We claim and will prove in {\it Step 5} that $\limsup_{\eps\rightarrow 0} \frac{1}{\eps^2}\dbE^{\a^*}\left[\cE^\eps\int_{0}^{T}|\cR^\eps_t|\mathrm{d}t\right]<\infty$. Then, 
	\begin{equation}\label{EqFiniteDiff}
	\frac{J_A(\a^\epsilon) - J_A(\a^*)}{\eps} = \mathbb{E}^{\a^*}\left[\cE^\eps\int_{0}^{T}\big(- \partial_a c_{t}(\a^*_t)\Delta \a_t + Z_t(\partial_a b_t(X^{*}_t,\a^{*}_t)\Delta\a_t + \partial_xb_t(X^*_t,\a^*_t)\Delta \hat{X}_t)\big)\mathrm{d}t\right] + \mathcal{O}(\eps).
	\end{equation}
	
	\ms
	
	\no{\it Step 3}. In order to continue the variational calculus above, we need to introduce the adjoint process $P$ and $Q$ satisfying \eqref{adjointproc}. Let $ (P,Q) $ be the unique strong solution in $ \dbH^2(\dbP^{\a^*}) $of the following BSDE:
	\begin{equation*}
	\mathrm{d}P_t = - \partial_xH_t(X^*_t,X^*_t,\a^*_t,\a^*_t,P_t,Q_t,Z_t)\mathrm{d}t + Q_t\mathrm{d}W^{\a^*}_t\text{, }P_T=0,
	\end{equation*}
	where $ H $ is the Hamiltonian given in \eqref{eq: hamiltonian}. Note that the above BSDE is a linear BSDE and since $ P_T=0 $, $ P $ and $ Q $ are in $ \dbH^p(\dbP^{\a^*}) $ for all $ p\geq2 $.

	\ms
	
	\no{\it Step 4}. \q We are going to continue the variational calculus above and obtain \eqref{opitmalControlEquation}. By It\^o's formula,	we have
	\beaa
	\mathrm{d}(P_t\Delta\hat{X}_t) & = &\Big(-Z_t\partial_xb_t(X^*_t,\a^*_t)\Delta\hat{X}_t + \Delta\alpha_t\Big(P_t(\partial_ab_t + b_t\partial_a\sigma_t)(X^*_t,\a^*_t) + Q_t\partial_a\sigma_t(X^*_t,\a^*_t)\Big)\Big)\mathrm{d}t \\ 
	& & + \Big( Q_t\Delta \hat{X}_t + P_t(\partial_x\sigma_t(X^*_t,\a^*_t)\Delta \hat{X}_t + \partial_a\sigma_t(X^*_t,\a^*_t)\Delta \a_t) \Big)\mathrm{d}W^{\a^*}_t.
	\eeaa
	
	\no Using the fact that $ P_{T} = 0 $ and $ \Delta\hat{X}_{0} = 0 $, we get
	\bea
	0 & = &\int_{0}^{T}\Big(-Z_tb_t(X^*_t,\a^*_t)\Delta\hat{X}_t + \Delta\alpha_t\Big(P_t(\partial_ab_t + b_t\partial_a\sigma_t)(X^*_t,\a^*_t) + Q_t\partial_a\sigma_t(X^*_t,\a^*_t)\Big)\Big)\mathrm{d}t \nonumber \\ 
	& & + \int_{0}^{T}\Big( Q_t\Delta \hat{X}_t + P_t(\partial_x\sigma_t(X^*_t,\a^*_t)\Delta \hat{X}_t + \partial_a\sigma_t(X^*_t,\a^*_t)\Delta \a_t) \Big)\mathrm{d}W^{\a^*}_t. \label{eq: adjoint_calculus01}
	\eea
	
	Note that
	\bea
	& & \int_{0}^{T}\Big( Q_t\Delta \hat{X}_t + P_t(\partial_x\sigma_t(X^*_t,\a^*_t)\Delta \hat{X}_t + \partial_a\sigma_t(X^*_t,\a^*_t)\Delta \a_t) \Big)\mathrm{d}W^{\a^*}_t \nonumber \\
	&=& \int_{0}^{T}\Big( Q_t\Delta \hat{X}_t + P_t(\partial_x\sigma_t(X^*_t,\a^*_t)\Delta \hat{X}_t + \partial_a\sigma_t(X^*_t,\a^*_t)\Delta \a_t) \Big)(b_t(X^\eps_t,\a^\eps_t) - b_t(X^*_t,\a^*_t))\mathrm{d}t \nonumber \\
	& & \qquad + \int_{0}^{T}\Big( Q_t\Delta \hat{X}_t + P_t(\partial_x\sigma_t(X^*_t,\a^*_t)\Delta \hat{X}_t + \partial_a\sigma_t(X^*_t,\a^*_t)\Delta \a_t) \Big)\mathrm{d}W^{\a^\eps}_t. \label{eq: adjoint_calculus02}
	\eea
	Denote
	\begin{equation}\label{eq: epsilon}
	\hat{\cR}^\eps_t := \Big( Q_t\Delta \hat{X}_t + P_t(\partial_x\sigma_t(X^*_t,\a^*_t)\Delta \hat{X}_t + \partial_a\sigma_t(X^*_t,\a^*_t)\Delta \a_t) \Big)(b_t(X^\eps_t,\a^\eps_t) - b_t(X^*_t,\a^*_t)).
	\end{equation}
	We claim and will prove in {\it Step 5} that $\limsup_{\eps\rightarrow 0} \frac{1}{\eps}\dbE^{\a^*}\left[\cE^\eps\int_{0}^{T}|\hat{\cR}^\eps_t|\mathrm{d}t\right]<\infty$.
	
	Inserting the equation \eqref{eq: adjoint_calculus01} and equation \eqref{eq: adjoint_calculus02} into \eqref{EqFiniteDiff}, we get
	\beaa
	\frac{J_A(\a^\epsilon) - J_A(\a^*)}{\eps} 
	&=& \mathbb{E}^{\a^*}\Big[\cE^\eps\int_{0}^{T}\Big(- \partial_a c_{t}(\a^*_t)\Delta \a_t + Z_t\partial_a b_t(X^{*}_t,\a^{*}_t)\Delta\a_t \\
	& & + \Delta\a_t\big(P_t(\partial_ab_t + b_t\partial_a\sigma_t)(X^*_t,\a^*_t) + Q_t\partial_a\sigma_t(X^*_t,\a^*_t)\big) \Big)\mathrm{d}t\Big] + \mathcal{O}(\eps).
	\eeaa
	Now, let $ \eps $ goes to $ 0 $ and by dominated convergence theorem, we obtain
	\beaa
	0\geq \lim_{\epsilon\to0}\left(\frac{J_A(\a^\epsilon) - J_A(\a^*)}{\eps}\right)
	&=& \mathbb{E}^{\a^*}\Big[\int_{0}^{T}\Delta \a_t\Big(- \partial_a c_{t}(\a^*_t) + Z_t\partial_a b_t(X^{*}_t,\a^{*}_t) \\
	& & + P_t(\partial_ab_t + b_t\partial_a\sigma_t)(X^*_t,\a^*_t) + Q_t\partial_a\sigma_t(X^*_t,\a^*_t)\Big)\mathrm{d}t\Big] \\
	&=& \dbE^{\a^*}\left[ \int_{0}^{T}\partial_a H_t(X^*_t,X^*_t,\a^*_t,\a^*_t,P_t,Q_t,Z_t)\Delta\a_t\mathrm{d}t \right]
	\eeaa
	Since $ \Delta\alpha $ is arbitrary, if $ \a^* $ is an optimal control, \eqref{opitmalControlEquation} holds true.
	
	\ms
	
	\no{\it Step 5}.\q Finally we shall complete the proof by proving the claim we made on $ (\cR^\eps_t)_{t\in[0,T]} $ and $ (\hat{\cR}^\eps_t)_{t\in[0,T]} $. 
	The first term in \eqref{eq:Reps} is clearly uniformly bounded by $ C\eps^2 $ for some positive constant $ C $, since the controls are bounded.
	
	By Hölder's inequality, we have
	\beaa
	& & \mathbb{E}^{\a^*}\left[ \cE^\eps\int_{0}^{T}Z_t\Big(b_t(X^{\eps}_t,\a^{\eps}) - b_t(X^*_t,\a^*_t) - \partial_a b_t(X^{*}_t,\a^{*}_t)\delta\a_t + \partial_xb_t(X^*_t,\a^*_t)\delta X_t\Big)\mathrm{d}t \right] \\
	&\leq& \mathbb{E}^{\a^*}\left[ \cE^\eps\left(\int_{0}^{T}Z^2_t\mathrm{d}t\right)^{\frac12}\left(\int_{0}^{T}\Big(b_t(X^{\eps}_t,\a^{\eps}) - b_t(X^*_t,\a^*_t) - \partial_a b_t(X^{*}_t,\a^{*}_t)\delta\a_t + \partial_xb_t(X^*_t,\a^*_t)\delta X_t\Big)^2\mathrm{d}t\right)^{\frac12} \right] \\
	&\leq& \|\cE^\eps\|_{p}\|Z\|_{\dbH^2_T(\dbP^{\a^*})}\dbE^{\a^*}\left[\left(\int_{0}^{T}\Big(b_t(X^{\eps}_t,\a^{\eps}) - b_t(X^*_t,\a^*_t) - \partial_a b_t(X^{*}_t,\a^{*}_t)\delta\a_t + \partial_xb_t(X^*_t,\a^*_t)\delta X_t\Big)^2\mathrm{d}t\right)^{\frac{q}{2}} \right]^\frac1q \\
	&\leq& CT\sup_\eps\|\cE^\eps\|_{p}\|Z\|_{\dbH^2_T(\dbP^{\a^*})}\dbE^{\a^*}\left[\left(\int_{0}^{T}\Big(\Delta\a^2_t + \Delta X^2_t\Big)\mathrm{d}t\right)^{\frac{q}{2}} \right]^\frac1q\eps^2,
	\eeaa
	where $ \frac{1}{p} + \frac{1}{q} = \frac12 $ with $ p $ given in Assumption \ref{A2}. As shown in the proof of Lemma \ref{lem: moment}, $ \Delta X\in\dbH^p_T(\dbP^{\a^*}) $ for all $ p\geq2 $, therefore, the second term of the right hand side of \eqref{eq:Reps} is also bounded by $ C\eps^2 $ for some positive constant $ C $. As for the third term in \eqref{eq:Reps}, again by Lemma \ref{lem: moment}, using the definition $ \eps\Delta X_t = X^\eps_t - X^*_t $, we have
	\beaa
	\mathbb{E}^{\a^\eps}\left[ \int_{0}^{T}\eps |Z_t\partial_xb_t(X^*_t,\a^*_t)(\Delta X_t - \Delta\hat{X}_t)|\mathrm{d}t \right] 
	&\leq& C\mathbb{E}\left[ \hat{\cE}^{\eps}\int_{0}^{T}\eps|Z_t(\Delta X_t - \Delta\hat{X}_t)|\mathrm{d}t \right] \\
	&\leq& C\|\hat{\cE}^{\eps}\|_{p}\|Z\|_{\dbH^2_T(\dbP)}\dbE\left[ \sup_{t\in[0,T]}|\eps\Delta X_t - \eps\Delta\hat{X}_t|^q \right]^{\frac1q} \\
	&\leq& C\sup_\eps\|\hat{\cE}^{\eps}\|_{p}\|Z\|_{\dbH^2_T(\dbP)}\dbE\left[ \sup_{t\in[0,T]}|\Delta\a_t|^{2q} \right]^\frac1q\eps^{2},
	\eeaa
	where $ \frac{1}{p} + \frac{1}{q} = \frac12 $ with $ p $ given in Assumption \ref{A2} and $ \hat{\cE}^{\eps} $ is the Radon-Nikodym derivative between $ \dbP^{\a^\eps} $ and the Wiener measure $ \dbP $, namely
	\begin{equation*}
	\hat{\cE}^{\eps} := \exp\left(\int_{0}^{T}b_t(X^\eps_t,\a^\eps_t) \mathrm{d}B_t - \frac12\int_{0}^{T}b^2_t(X^\eps_t,\a^\eps_t)\mathrm{d}t\right),
	\end{equation*}
	which has finite moment uniformly in $ \eps $ by Assumption \ref{A2}.  Finally to complete our proof, we shall prove $\limsup_{\eps\rightarrow 0} \frac{1}{\eps}\dbE^{\a^*}\left[\cE^\eps\int_{0}^{T}|\hat{\cR}^\eps_t|\mathrm{d}t\right]<\infty$ where $ \hat{\cR}^\eps_t $ is defined by the equation \eqref{eq: epsilon}. Note that $ \a^\eps - \a^* = \eps\Delta \a $ and as shown in the proof of Lemma \ref{lem: moment},
	\begin{equation*}
	\dbE^{\a^*}\left[\int_{0}^{T}|X^\eps_t-X^*_t|^p\mathrm{d}t\right]\leq C\mathbb{E}\left[\sup_{t\in[0,T]}|X^\eps_t - X^*_t|^p\right] \leq C\mathbb{E}\Bigg[\int_{0}^{T}|\Delta\a_t|^{p}\mathrm{d}t\Bigg]\eps^{p}
	\end{equation*}
	for some constants $ C $. Therefore, by Assumption \ref{A1}, using Cauchy-Schwarz inequality and Hölder inequality, we have
	\beaa
	&&\dbE^{\a^*}\left[ \cE^\eps\int_{0}^{T}|\hat{\cR}^\eps_t|\mathrm{d}t \right] \\
	&\leq& C\dbE^{\a^*}\left[ \int_{0}^{T}\left(|P_t|^4 + |Q_t|^4 + |\Delta X_t|^4 + |\Delta \a_t|^4 \right)\mathrm{d}t \right]^{\frac12}\dbE^{\a^*}\left[ \left(\int_{0}^{T}(|\Delta X_t|^2 + |\Delta \a_t|^2)\mathrm{d}t\right)^2 \right]^{\frac14}\eps,
	\eeaa
	for some constant $ C $. Therefore, $\limsup_{\eps\rightarrow 0} \frac{1}{\eps}\dbE^{\a^*}\left[\cE^\eps\int_{0}^{T}|\hat{\cR}^\eps_t|\mathrm{d}t\right]<\infty$.
	\qed

\no {\bf Proof of Theorem \ref{sufficientCondition}} \q
Let $ \a\in \cU$  and denote the corresponding state variable by $ X $. Denote $ \Delta\a := \a - \a^* $ and $ \Delta X = X - X^* $. By the same computation as in the proof of Theorem \ref{necessaryCondition} and the concavity of $ \cG $, we have
\beaa
&  & J_A(\a) - J_A(\a^*) \\
&= & \mathbb{E}^{\a}\left[\int_{0}^{T}\big(- \delta c_{t} + Z_t(b_t(X_t,\a_t) - b_t(X^*_t,\a^*_t))\big)\mathrm{d}t + P_T\Delta X \right]  \\
&= & \mathbb{E}^{\a}\Bigg[ \int_{0}^{T}\big(\cG_t(X_t,X^*_t,\a_t,\a^*_t,P_t,Q_t,Z_t) - \cG_t(X_t,X^*_t,\a_t,\a^*_t,P_t,Q_t,Z_t) \\
& & \qquad\qquad\qquad\qquad\qquad - \partial_x\cG_t(X^*_t,X^*_t,\a^*_t,\a^*_t,P_t,Q_t,Z_t)\Delta X_t\big)\mathrm{d}t \Bigg]\leq 0.
\eeaa\qed

\no {\bf Proof of Theorem \ref{principalProblem}} \q
Since the enlarged Principal's problem has a larger admissible contract set, the Principal's value is higher than the initial problem of the Principal. If $ \bar V_P $ has an optimal solution $ \xi^* $ such that the solution $ Q^* $ of the FBSDE \eqref{stateequation01}-\eqref{stateequation02} satisfies the sufficient condition given by Theorem \ref{sufficientCondition}, then we know that the control $ \a^* $ defined by \eqref{opitmalControlEquation} is indeed the optimal control for the Agent's Problem given this contract $ \xi^* $. Therefore, $ \xi^*\in\Upxi $ and consequently $ V_P\geq \bar V_P $. Combined with the first statement of the theorem, we get $ V_P=\bar V_P $.
\qed

\section{Applications}
We shall make the following assumption throughout this section.
\begin{assum}\label{A3}
	The Agent's cost function $ (t,b)\mapsto c(t,b) $ is deterministic, strictly convex, increasing and twice differentiable in $ b $.
\end{assum}

\subsection{Principal-Agent Problem in Partially Observed Linear System}\label{Section4.1}
We return to the context in Section \ref{Section2.3} and aim at solving the Principal-Agent problem  \eqref{pb:principal_partial}-\eqref{pb:agent_partial}. In addition, we assume that the unobservable process $ \hat X $ is non-affected by the Agent's control $ \a $. We recall that the controlled state variables follow \eqref{filterProcess_partial}-\eqref{observeproc_partial} with $ \a\equiv0 $ and that the associated Hamiltonian function is given by
\begin{equation*}
H(t, x, b, p, q, z) := \Big(\eta(t)x - h(t)V(t)(h(t)x + b)\Big)p - \Big( h(t)x + b \Big)z + c(t,b).
\end{equation*}

The necessary condition by Theorem \ref{necessaryCondition} becomes
\begin{numcases}{}
\mathrm{d}Y_{t} = c(t,\beta^*_t)\mathrm{d}t + Z_{t}\mathrm{d}I_{t} & $ Y_{T}=\xi $, \label{eq:Yriskneutre} \\
\mathrm{d}P_{t} = \left(h(t)Z_{t} - (\eta(t) - V(t)h^2(t))P_t\right)\mathrm{d}t + Q_{t}\mathrm{d}I_{t} & $ P_{T}=0 $, \label{eq:adjointriskneutre} \\
\mathrm{d}X_t = \eta(t)X_t\mathrm{d}t + h(t)V(t)\mathrm{d}I_{t} & $ X_0=m_0 $, \label{eq:Xriskneture}
\end{numcases}
where 
\bea
\b^*_t = \argmin_{b\in A }H(t, X_t,b,P_t,Q_t,Z_t). \label{opitmalControlEquation_partial}
\eea
The above system describes the contracts under the optimal response of the Agent. The sufficient condition can also be re-written under simpler form:
\begin{thm}\label{sufficientCondition_partial}
	Let $ (Y^*,Z^*,P^*,Q^*,X^*) $ be a solution to the system \eqref{eq:Yriskneutre}-\eqref{eq:adjointriskneutre}, where $ \b^* $  satisfies \eqref{opitmalControlEquation_partial}. Define $\xi:=Y^*_T $. Then $ \b^* $ is an optimal control if 
	\begin{equation}\label{QsufficientCondition_partial}
	\frac{Q^*_t}{2h(t)}\in\left[0, \inf_{b\in A }\left\{\partial^2_{bb} c(t,b)\right\}\right]
	\q\mbox{for all}\q t\in[0,T] .
	\end{equation}
	In particular, in this case $\xi\in \Upxi$.
\end{thm}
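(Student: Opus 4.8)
The plan is to specialize the general sufficient condition, Theorem~\ref{sufficientCondition}, to the linear-filter coefficients of Section~\ref{Section4.1} and to read off \eqref{QsufficientCondition_partial} from the definiteness of a single $2\times2$ Hessian. Since $(Y^*,Z^*,P^*,Q^*,X^*)$ solves \eqref{eq:Yriskneutre}--\eqref{eq:adjointriskneutre} with $\beta^*$ given by \eqref{opitmalControlEquation_partial}, Theorem~\ref{sufficientCondition} tells us it suffices to verify the concavity (in the appropriate sign convention, the definiteness) of the map $(x,b)\mapsto \cG_t(x,X^*_t,b,\beta^*_t,P^*_t,Q^*_t,Z^*_t)$. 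So the first step is purely to substitute the explicit coefficients $\eta_t(x)=\eta(t)x$, $\sigma_t\equiv h(t)V(t)$ and $b_t(x,b)=h(t)x+b$ into the definition of $\cG$.

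The decisive simplification, and the reason the linear filter is tractable, is that here $\sigma_t$ depends on neither the state nor the control. First I would record that, $\sigma_t$ being constant, the $\sigma$-dependent correction term of $\cG$ reduces to $-P^*_t\sigma_t b_t(X^*_t,\beta^*_t)$ and cancels the matching term inside $H$, so that after substitution the map collapses to a part affine in $(x,b)$, plus the single bilinear/quadratic block $(x-X^*_t)Q^*_t\big(h(t)x+b\big)$ coming from the $(x-x')q\,b_t(x,a)$ term of $\cG$, plus the cost contribution in $b$. Differentiating twice, all the genuine nonlinearity sits in the $bb$-entry, and the Hessian in $(x,b)$ is, up to the overall sign dictated by the $\argmin$/$+c$ orientation of \eqref{opitmalControlEquation_partial}--\eqref{eq:adjointriskneutre}, the matrix
\[
\mathbf H_t(b)=\begin{pmatrix} 2h(t)Q^*_t & Q^*_t \\ Q^*_t & \partial^2_{bb}c(t,b)\end{pmatrix}.
\]

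The second step is to convert the required definiteness of $\mathbf H_t(b)$ into the stated interval: the concavity asked for in Theorem~\ref{sufficientCondition} becomes positive semidefiniteness of $\mathbf H_t(b)$ for every admissible $b$. Under Assumption~\ref{A3} the cost is strictly convex, so the $bb$-entry $\partial^2_{bb}c(t,b)>0$ is automatically positive, and positive semidefiniteness of this symmetric $2\times2$ matrix reduces to $2h(t)Q^*_t\ge0$ together with $\det\mathbf H_t(b)=2h(t)Q^*_t\,\partial^2_{bb}c(t,b)-(Q^*_t)^2\ge0$. The leading-entry inequality gives the lower bound $0\le Q^*_t/(2h(t))$, while the determinant inequality, after rearranging and taking the infimum over $b\in A$, gives the upper bound $Q^*_t/(2h(t))\le \inf_{b\in A}\partial^2_{bb}c(t,b)$; the degenerate case $Q^*_t=0$ is immediate. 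Together these are precisely \eqref{QsufficientCondition_partial}.

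With $\beta^*$ now verified to be optimal for $V_A(Y^*_T)$, the implementability of $\xi:=Y^*_T$ follows at once: the Agent's optimizer exists, the square-integrability demanded in \eqref{admissibleContract} is inherited from the $\mathbb{H}^2_T$-integrability of the solution together with the uniform moment bound of Assumption~\ref{A2}, and the participation constraint holds because $V_A(\xi)=Y_0\ge R$; hence $\xi\in\Upxi$. I expect the real obstacle to lie not in the algebra but in the sign bookkeeping: making the cancellation of the $\sigma$-terms exact, and then reconciling the $\argmin$/$+c$ orientation of Section~\ref{Section4.1} with the concavity statement of Theorem~\ref{sufficientCondition} so that both endpoints of the interval come out correctly---the lower one from the sign of the leading entry and the upper one uniformly in $b$ from the Schur complement.
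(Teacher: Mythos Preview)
Your approach is correct and is precisely the one the paper has in mind: Theorem~\ref{sufficientCondition_partial} is stated without proof in the paper because it is a direct specialization of Theorem~\ref{sufficientCondition} to the linear-filter coefficients, and your Hessian computation and the resulting $2\times2$ semidefiniteness analysis recover \eqref{QsufficientCondition_partial} exactly.

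Two small points worth tightening. First, when you map the linear system into the general framework of Section~\ref{MainResult}, note that the $X$-equation there is written with $\mathrm dB_t$, not with the innovation $\mathrm dI_t$; consequently the drift in the general coordinates is $\eta_t(x,\beta)=(\eta(t)-h^2(t)V(t))x-h(t)V(t)\beta$, not $\eta(t)x$. This correction is affine in $(x,\beta)$ and therefore does not change your Hessian, but it is the reason the adjoint equation \eqref{eq:adjointriskneutre} carries the extra $V(t)h^2(t)P_t$ term. Second, the sign reconciliation you flag is exactly that Section~\ref{Section4.1}'s $(P,Q)$ is the negative of the general $(P,Q)$ (compare $\argmin$ with $+c$ versus $\argmax$ with $-c$); once you carry this through, concavity of $\cG$ in Theorem~\ref{sufficientCondition} becomes positive semidefiniteness of your matrix $\mathbf H_t(b)$, and both endpoints of the interval fall out as you describe. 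For the final claim $\xi\in\Upxi$, note that the participation constraint $V_A(\xi)=Y_0\ge R$ is not part of the hypotheses of the theorem as stated; the intended reading is that under \eqref{QsufficientCondition_partial} one has $\xi\in\ul\Upxi\subset\Upxi$ whenever $Y_0\ge R$.
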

\begin{rem}
	We can also obtain the necessary and sufficient condition in the case of exponential utility Agent, the proof and the hypothesis are slightly different, we shall detail them in the Appendix.
\end{rem}

Together with Theorem \ref{principalProblem}, we can solve the Principal's problem explicitly.

\begin{thm}
	Let Assumption \ref{A3} hold true. Assume in addition that the function $ b\in B\mapsto b - c(t,b) $ has a maximiser, denoted by $ \bar{\beta}^*(t) $. Define
	\begin{equation*}
	Z^{*}_{t} = \partial_bc(t,\bar{\beta}^{*}(t)) + \int_{t}^{T}e^{\int_{t}^{s}\eta(r)\mathrm{d}r}h(s)\partial_bc(s,\bar{\beta}^{*}(s))\mathrm{d}s. 
	\end{equation*}
	The optimal contract for the Principal's problem is given by
	\begin{equation}\label{eq:optimalcontractrn}
	\xi^* = R + \int_{0}^{T}(c(t,\bar{\beta}^*(t)) - Z^*_t(h(t)\bar{X}^*_t + \bar{\beta}^*(t)))\mathrm{d}t + \int_{0}^{T}Z^*_{t}\mathrm{d}B_t,	
	\end{equation}
	where $ \bar{X}^*_{t} $ is the unique solution to \eqref{eq:Xriskneture} with innovation process $ I $ computed with $ \bar \b^* $ and the Principal's optimal expected utility is
	\begin{equation*}
	V_{P} = -R + \int_{0}^{T}\left(\bar{\beta}^*(t) - c(t,\bar{\beta}^*(t))\right)\mathrm{d}t + \int_{0}^{T}h(t)m(t)\mathrm{d}t,
	\end{equation*}
	where $ m(t):= m_0e^{\int_{0}^{t}\eta(s)\mathrm{d}s} $ for $ t\in[0,T] $.
\end{thm}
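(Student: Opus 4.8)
The plan is to invoke Theorem \ref{principalProblem}, which reduces the problem to computing the enlarged value $\bar V_P=\sup_{Y_0\ge R}\sup_{Z\in\ol\cZ}\dbE^{\b^*}[B_T-Y_T]$ and then exhibiting a maximiser that satisfies the sufficient condition \eqref{QsufficientCondition_partial}. Since here $U=\mathrm{id}$ and the discount factor is trivial, the real task is to rewrite $\dbE^{\b^*}[B_T-Y_T]$ into a form that can be maximised explicitly, and afterwards to check that the optimiser is implementable.

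First I would turn the objective into a pointwise optimisation. Integrating the backward equation \eqref{eq:Yriskneutre} gives $Y_T=Y_0+\int_0^T c(t,\b^*_t)\mathrm{d}t+\int_0^T Z_t\mathrm{d}I_t$, and since the innovation $I$ is a $\dbP^{\b^*}$-Brownian motion (the filtering proposition in Section \ref{Section2.3}), the stochastic integral has zero $\dbP^{\b^*}$-mean for $Z\in\dbH^2_T(\dbP^{\b^*})$. Writing $B_T=\int_0^T(h(t)X_t+\b^*_t)\mathrm{d}t+I_T$ and taking $\dbP^{\b^*}$-expectations yields
\[
\dbE^{\b^*}[B_T-Y_T]=-Y_0+\int_0^T h(t)\,\dbE^{\b^*}[X_t]\,\mathrm{d}t+\dbE^{\b^*}\Big[\int_0^T\big(\b^*_t-c(t,\b^*_t)\big)\mathrm{d}t\Big].
\]
The decisive observation is that, because $\a\equiv0$, the forward equation \eqref{eq:Xriskneture} is autonomous and driven only by $I$, so $\dbE^{\b^*}[X_t]=m_0e^{\int_0^t\eta(s)\mathrm{d}s}=m(t)$ regardless of $Z$. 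The objective therefore decouples: $\int_0^T h(t)m(t)\mathrm{d}t$ is a constant, $-Y_0$ is maximised by the binding participation constraint $Y_0=R$, and the last term is a pointwise optimisation of $b\mapsto b-c(t,b)$, whose maximiser is $\bar\b^*(t)$. This gives the upper bound $\bar V_P\le -R+\int_0^T(\bar\b^*(t)-c(t,\bar\b^*(t)))\mathrm{d}t+\int_0^T h(t)m(t)\mathrm{d}t$, which is exactly the asserted value of $V_P$.

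It then remains to prove attainability, i.e. to produce a $Z^*\in\ol\cZ$ whose associated optimal response is $\b^*\equiv\bar\b^*$ and which moreover lies in $\ul\cZ$. I would proceed constructively: imposing $\b^*=\bar\b^*$ in the first order condition \eqref{opitmalControlEquation_partial} and differentiating the Hamiltonian in $b$ gives the relation $\partial_b c(t,\bar\b^*(t))=Z_t+h(t)V(t)P_t$, and I look for a deterministic solution of the adjoint equation \eqref{eq:adjointriskneutre} with $Q^*\equiv0$. Substituting this relation into \eqref{eq:adjointriskneutre} I expect the Kalman-gain terms in $h^2V$ to cancel, leaving the elementary linear ODE $\dot P_t=h(t)-\eta(t)P_t$ with $P_T=0$; solving it backwards gives $P^*_t=-\int_t^T e^{\int_t^s\eta(r)\mathrm{d}r}h(s)\mathrm{d}s$, and the first order relation then delivers the closed form for $Z^*_t$. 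Because the candidate is deterministic we have $Q^*\equiv0$, so $\frac{Q^*_t}{2h(t)}=0$ lies in $[0,\inf_b\partial^2_{bb}c(t,b)]$ by the strict convexity of $c$ in Assumption \ref{A3}; hence the sufficient condition \eqref{QsufficientCondition_partial} of Theorem \ref{sufficientCondition_partial} holds and $Z^*\in\ul\cZ$. Theorem \ref{principalProblem} then forces $V_P=\bar V_P$ and identifies $\xi^*=Y^*_T$ as optimal, and rewriting $\int_0^T Z^*_t\mathrm{d}I_t=\int_0^T Z^*_t\mathrm{d}B_t-\int_0^T Z^*_t(h(t)\bar X^*_t+\bar\b^*(t))\mathrm{d}t$ recasts $Y^*_T$ into the announced formula \eqref{eq:optimalcontractrn}.

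The main obstacle is this last constructive step: one must solve the coupled forward–backward system explicitly rather than merely bound it, and the crux is the algebraic cancellation of the $h^2V$ terms, which collapses the adjoint BSDE to a one-dimensional linear ODE solvable in closed form. Confirming that the resulting deterministic triple $(Z^*,P^*,Q^*)$ genuinely satisfies the sufficient condition — so that the candidate contract is actually implementable and the bound $V_P\le\bar V_P$ is saturated — is precisely what upgrades the a priori estimate into the exact optimal contract and value.
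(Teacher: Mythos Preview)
Your proposal is correct and follows essentially the same route as the paper: compute $\dbE^{\b^*}[B_T-Y_T]$, use that $\dbE^{\b^*}[X_t]=m(t)$ is control-independent to obtain the upper bound, then impose $\b^*=\bar\b^*$ in the first-order relation $Z_t=\partial_b c(t,\bar\b^*(t))-h(t)V(t)P_t$, observe the $h^2V$ cancellation that collapses \eqref{eq:adjointriskneutre} to a linear ODE with deterministic solution $(P^*,Q^*=0)$, and conclude via Theorem~\ref{sufficientCondition_partial} and Theorem~\ref{principalProblem}. The only cosmetic difference is that you exploit the interior first-order condition $\partial_b c(t,\bar\b^*(t))=1$ to write the adjoint ODE as $\dot P_t=h(t)-\eta(t)P_t$, whereas the paper keeps the factor $\partial_b c(s,\bar\b^*(s))$ explicit throughout; this is harmless provided the maximiser $\bar\b^*(t)$ is interior to $A$, which both you and the paper are tacitly assuming when treating \eqref{opitmalControlEquation_partial} as an equality.
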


\begin{rem}
	It is immediately verified that the optimal contract \eqref{eq:optimalcontractrn} is indeed implementable as it depends only on the observable of $ B $. In contrast with the classical Principal-Agent literature, the filtered process $ X $ is not observable by the Principal and cannot be contracted upon. In fact, in order to compute the filtered process $ X $, one needs to observe the Agent's control $ \b $ to compute the innovation process $ I $ given by \eqref{eq:innov} before solving the equation \eqref{eq:Xriskneture}.
\end{rem}

\begin{proof}
	Note that for any $Z\in \dbH^2_T$,
	\bea
	\bar J_P(Y_0,Z) &=&\mathbb{E}^{\beta^*}\left[\left(B_T - Y_T\right)\right]
	= \mathbb{E}^{\beta^{*}}\left[\left(\int_{0}^{T}(h(t)X_{t} + \beta^{*}_{t})\mathrm{d}t - \int_{0}^{T}c(t,\beta^*_t)\mathrm{d}t\right) \right] - Y_0 \notag \\
	&=& \mathbb{E}^{\beta^{*}}\left[\int_{0}^{T}\left(\beta^{*}_{t} - c(t,\beta^*_t)\right)\mathrm{d}t \right] - Y_0 + \int_{0}^{T}h(t)m(t)\mathrm{d}t,\notag \\
	&\le &  \mathbb{E}^{\beta^{*}}\left[ \int_{0}^{T}\left(\bar\beta^{*}_{t} - c(t,\bar\beta^*(t))\right)\mathrm{d}t \right] - R + \int_{0}^{T}h(t)m(t)\mathrm{d}t. \label{eq:barVupper}
	\eea
	We are going to show that $ \bar{\beta}^* $ is Agent's optimal control associated to a contract $\xi^*\in \ul\Upxi$.
	In order for $\bar\b^*$ to be an optimal control, it follows from \eqref{opitmalControlEquation_partial} that the processes $Z^*$ and $P^*$ in the forward-backward system must satisfy
	\begin{equation}\label{eq:ZandP}
	Z^*_{t} = \partial_bc(t,\bar{\beta}^{*}(t)) - V(h)h(t)P^*_{t}.
	\end{equation}
	Inserting the equality above into \eqref{eq:adjointriskneutre}, we obtain
	\begin{equation*}
	\mathrm{d}P_{t} = (h(t)\partial_bc(t,\bar{\beta}^{*}(t)) - \eta(t)P_{t})\mathrm{d}t + Q_{t}\mathrm{d}I^{\bar{\beta}^{*}},\q P_{T}=0.
	\end{equation*}
	This BSDE does have a unique solution given by
	\begin{equation*}
	P^{*}_{t} = -\int_{t}^{T}e^{\int_{t}^{s}\eta(r)\mathrm{d}r}h(s)\partial_bc(s,\bar{\beta}^{*}(s))\mathrm{d}s,
	\q Q^*_{t} = 0,\q  t\in[0,T].
	\end{equation*}
	Note that $Q$ satisfies \eqref{QsufficientCondition_partial}. Again by \eqref{eq:ZandP}, we have
	\begin{equation*}
	Z^{*}_{t} = \partial_bc(t,\bar{\beta}^{*}(t)) + \int_{t}^{T}e^{\int_{t}^{s}\eta(r)\mathrm{d}r}h(s)\partial_bc(s,\bar{\beta}^{*}(s))\mathrm{d}s, 
	\end{equation*}
	which is a deterministic process. Further, there is a unique $\dbF^B$-adapted solution $\bar X^*$ to the SDE \eqref{filterProcess_partial} with $ \a\equiv0 $. Using the process $Z^*$ and $\bar X^*$, we immediately find an $\dbF^B$-adapted process $Y^*$ satisfying \eqref{eq:Yriskneutre}. So far, we find an $\dbF^B$-adapted solution $ (Y^*,Z^*,P^*,Q^*,X^*) $ to the forward-backward system. 
	Let $ \xi^* $ be the contract defined by $\xi^*:= Y^*_T$, in other words, $\xi^*$ satisfies \eqref{eq:optimalcontractrn}. Clearly, we have $\xi^*\in \ul\Upxi$ and $\bar\b^*$ is the corresponding Agent's optimal control. Therefore, by \eqref{eq:barVupper} and $\ul\Upxi\subset \Upxi\subset\ol{\Upxi} $ we obtain $\bar V_P= V_P$. 
	\qed
\end{proof}

\subsection{Mean Field Interacting Agents in Partially Observed Linear System}
In our second application, we consider the Principal-Agent problem in the same partially observed system but with $ N $ interacting Agents.
As in Section \ref{Section2.3}, we denote $ \hat X $ as the unobservable process and $ B $ as the observable, both assumed to be 1-dimensional for simplicity. Agents $ i=1,\cdots,n $ have private state processes $ (\hat X^i,B^i) $ with dynamics
\beaa
\mathrm{d}B^i_{t} = (h(t)\bar{\lambda}^N_t \hat X^i_{t} + \beta^i_{t})\mathrm{d}t + \mathrm{d}W^{\beta^i}_{t},\q B_0 = 0,
\eeaa
where $\bar{\lambda}^N_t = \frac{1}{N}\sum_{1}^{N}B^i_t$ is the mean value of the observables at time $ t $ and $ \hat X $ is the unique strong solution of
\beaa
\mathrm{d}\hat X^i_{t} = \eta(t)\hat X^i_{t}\mathrm{d}t + \sigma(t)\mathrm{d}W^i_{t},\q \hat X_{0} = \mu^i.
\eeaa
Here, $ W^1,\cdots,W^N,W^{\b^1},\cdots,W^{\b^N} $ are independent Brownian motions, and $ \beta^i $ is the control chosen by the Agent $ i $. We assume that $ \mu^1,\cdots,\mu^N $ are independent Gaussian variables with mean $ m_0 $ and variance $ V_0 $.

The Principal proposes the same contract $ \xi $ to all the Agents. Then the interacting agents agree on a Nash equilibrium in which each Agent cannot improve his utility by deviating unilaterally.

The problem becomes very difficult in general because of the coupling. Note that the observable process $ B^i $ for Agent $ i $ is no longer Gaussian and the filter of $ X^i $, namely the conditional law of $ X^i $ given $ B^i $, is no longer described by a finite-dimensional process. 

However, as the number of Agents $ N\rightarrow\infty$, we expect the system to become decoupled, in which case we may be able to learn something about the Nash equilibrium from the corresponding mean field game (MFG).

\subsubsection{The Setting of Mean Field Interacting Agent Problem}
As before, denote $ \hat{X} $ the unique strong solution of the following linear stochastic differential equation, which represents the unobservable part of the system.
\begin{equation*}
\mathrm{d}\hat{X}_{t} = \eta(t)\hat{X}_{t} \mathrm{d}t + \sigma(t)\mathrm{d}W_{t},\q X_{0} = \mu.
\end{equation*}
Let $\l\in C([0,T],\dbR)$ represent the observed population mean, for any $ \b\in\cU $, define
\begin{equation*}
\left.\frac{\mathrm{d}\dbP^\b}{\mathrm{d}\dbP}\right|_{\cF_T} = \exp\left( \int_{0}^{T}(h(t)\l_t\hat{X}_t + \beta_t)\mathrm{d}B_t + \frac12\int_{0}^{T}(h(t)\l_t\hat{X}_t + \beta_t)^2\mathrm{d}t \right).
\end{equation*}

By Girsanov theorem, the canonical process $ B $ satisfies the following stochastic differential equation under $ \dbP^\b $:
\begin{equation}
\mathrm{d}B_t = (h(t) \l_t  \hat X_{t} + \beta_{t})\mathrm{d}t + \mathrm{d}W^{\beta}_{t},\q B_{0} = 0\label{eq:Omfg},
\end{equation}
where $ W^\b_t $ is a Brownian motion under $ \dbP^\b $. Each Agent receives the payment $ \xi $ at the maturity $ T $ of the contract and aims at optimizing his utility function:
\begin{equation*}
	V_{A}(\xi, \lambda) = \sup_{\dbP^\beta\in \cP(\l)}\mathbb{E}^{\beta}\left[\xi - \frac{1}{2}\int_{0}^{T}c(t,\b_t)\mathrm{d}t\right].
\end{equation*}

Denote $ \mathcal{M}^{*}(\xi,\lambda) $ the set of optimal controls of the Agent's problem given the contract $ \xi $ and the observed population mean $ \l $. Define  
\beaa
\Phi(\xi, \lambda) := \{ \tilde\l : \q \tilde\l_t = \dbE^{\b^*}[B_t]\q\mbox{for all}~t\in[0,T]\q\mbox{and}\q \b^*\in\mathcal{M}^{*}(\xi,\lambda)  \}.
\eeaa
A mean field equilibrium is a fixed point of the map $\l\mapsto \Phi(\xi,\l) $. Denote $ \Theta(\xi) $ the set of equilibria among the Agents when given the contract $ \xi $ satisfying the participation constraint, namely the Agent's value should be above the reservation utility $ R $. The Principal's optimization problem is given by
\begin{equation*}
V_{P} = \sup_{\xi\in\Upxi}\sup_{\lambda\in\Theta(\xi)}\sup_{\beta^{*}\in\mathcal{M}^{*}(\xi, \lambda)}\mathbb{E}^{\beta^{*}}[B_T - \xi]
\end{equation*}
with the convention $ \sup\emptyset=-\infty $.

\begin{rem}
	As discussed in Elie, Mastrolia, Possamaï \cite{Mastrolia}, we are only going to consider contracts such that the Principal is able to compute the reaction of the Agents. In other words, the set of contracts given which there is at least one Nash equilibrium among the Agents. Indeed, the Principal needs to be able to anticipate how the Agents are going to react to the contracts that he may offer, and will therefore not offer contracts for which Agents cannot agree on an equilibrium.
\end{rem}

\subsubsection{The Agent's Problem}
For any fixed $\l \in C([0,T],\dbR)$, the observable process $ B $ in \eqref{eq:Omfg} is a Gaussian process and the Agent can compute the conditional mean and variance of the unobservable process $ (\hat{X}_t)_{t\geq0} $ using the Kalman filter. Denote as before $ X_t := \mathbb{E}^{\b}[\hat X_t|\mathcal{F}^B_t] $ and $ V_t = \mathbb{E}^{\b}[(\hat{X}_t - X_t)^2|\mathcal{F}^B_t] $.

\begin{prop}\label{prop:necessaryMFG}
	Let $\l \in C([0,T],\dbR)$ be the observed population mean and $ \beta\in\cU $. We have the following filter system:
	\begin{numcases}
	\text{ }\mathrm{d}X_t = \eta(t)X_t\mathrm{d}t + h(t)V(t)\lambda_t\mathrm{d}I^\beta_t & $ X_0=m_0 $, \nonumber \\
	\mathrm{d}V_t = (2\eta(t)V(t) - h^2(t)V^2(t)\lambda_t^2 + \sigma(t))\mathrm{d}t & $ V_{0} = V_{0} $, \label{eq:filtreMFG} \\
	\mathrm{d}B_t = (h(t)\lambda_tX_{t} + \beta_{t})\mathrm{d}t + \mathrm{d}I^\beta_t & $ B_0 = 0 $, \nonumber
	\end{numcases}
	where $ I^\b $ is the innovation process.
\end{prop}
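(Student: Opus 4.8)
The plan is to recognize Proposition \ref{prop:necessaryMFG} as a Kalman-Bucy filtering statement with exactly the same structure as the single-Agent filter recalled in \eqref{filterProcess_partial}-\eqref{observeproc_partial}, the only difference being that the deterministic observation gain $h(t)$ is replaced by the deterministic gain $h(t)\lambda_t$. Indeed, in the mean field formulation $\lambda\in C([0,T],\dbR)$ is a fixed continuous function, not a random process, so $t\mapsto h(t)\lambda_t$ is again a deterministic observation coefficient. Consequently I would reduce the problem to the classical linear-Gaussian filter of \cite[Chapter 2]{Bensoussan} rather than redo the entire derivation from scratch.

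First I would strip off the control drift by setting $\tilde B_t := B_t - \int_0^t \beta_s\,\mathrm{d}s$, so that under $\dbP^\beta$ (under which $W^\beta$ from \eqref{eq:Omfg} is a Brownian motion) one has $\mathrm{d}\tilde B_t = h(t)\lambda_t \hat X_t\,\mathrm{d}t + \mathrm{d}W^\beta_t$. Since $\beta$ is $\dbF^B$-adapted, $\dbF^{\tilde B}=\dbF^B$, so conditioning on $\cF^B_t$ is the same as conditioning on the signal $\tilde B$; in particular the control does not enter the conditional law of $\hat X_t$. Because the pair $(\hat X,\tilde B)$ solves a linear system with deterministic coefficients, driven by the independent Gaussian data $\mu\sim\cN(m_0,V_0)$ and the Brownian motions $(W,W^\beta)$, the process $(\hat X_t,\tilde B_{\cdot\wedge t})$ is jointly Gaussian; hence $\cL(\hat X_t\,|\,\cF^B_t)$ is Gaussian and is completely characterized by its mean $X_t$ and variance $V_t$.

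Next I would introduce the innovation $I^\beta_t := B_t-\int_0^t(h(s)\lambda_s X_s+\beta_s)\,\mathrm{d}s$ and show, exactly as for \eqref{eq:innov}, that it is a continuous $\dbF^B$-martingale with $\langle I^\beta\rangle_t = t$, so that by L\'evy's characterization it is an $\dbF^B$-Brownian motion under $\dbP^\beta$. The Fujisaki-Kallianpur-Kunita representation of the conditional mean then yields $\mathrm{d}X_t = \eta(t)X_t\,\mathrm{d}t + G_t\,\mathrm{d}I^\beta_t$, where the gain $G_t$ equals the conditional covariance of $\hat X_t$ against the observation coefficient, i.e. $G_t = h(t)\lambda_t V_t$; this is precisely the first line of \eqref{eq:filtreMFG}. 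Finally, computing $\mathrm{d}(\hat X_t - X_t)^2$ and taking conditional expectations so that the martingale terms drop out, I would obtain the Riccati equation $\dot V_t = 2\eta(t)V_t - h^2(t)\lambda_t^2 V_t^2 + \sigma(t)$, which is the variance line of \eqref{eq:filtreMFG}.

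The main obstacle I anticipate is the careful justification of the Gaussian and linear structure in the weak formulation: one must argue that inserting the $\dbF^B$-adapted control $\beta$ into the observation drift does not destroy conditional Gaussianity (this is handled by the $\tilde B$ reduction above) and, crucially, that the conditional variance $V_t$ remains deterministic even though $\beta$ is random. The latter is exactly why the right-hand side of the Riccati equation contains no observation term, so that $V$ solves a deterministic ODE independent of both the observations and $\beta$. Once these structural points are settled, the identification of the gain $h(t)\lambda_t V_t$ and of the Riccati ODE is the routine linear-Gaussian computation, and the statement follows from \cite[Chapter 2]{Bensoussan} with $h$ replaced by $h\lambda$.
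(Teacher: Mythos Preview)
Your proposal is correct and follows the same route as the paper: the paper does not give a separate proof of Proposition~\ref{prop:necessaryMFG} but treats it as the direct analogue of the Kalman--Bucy filter \eqref{filterProcess_partial}--\eqref{eq:varianceproc_partial} (itself deferred to \cite[Chapter 2]{Bensoussan}) with the deterministic observation gain $h(t)$ replaced by $h(t)\lambda_t$. Your reduction via $\tilde B_t = B_t - \int_0^t \beta_s\,\mathrm{d}s$ and the identification of the innovation and Riccati equation simply spell out that standard argument.
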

Using the filtered system, the Agent's problem becomes fully observable. We now recall the necessary and sufficient condition for the Agent's problem in this specific case in the two following propositions.

\begin{prop}\label{prop:sufficientMFG}
	Let $\l \in C([0,T],\dbR)$ be the observed population mean. For a contract $ \xi\in\Upxi $, let $ \beta^*\in\mathcal{M}^*(\lambda,\xi) $. Then the following FBSDE:
	\begin{numcases}{}
	\mathrm{d}Y_{t} = c(t,\b^*_t)\mathrm{d}t + Z_{t}\mathrm{d}I^{\beta^*}_{t} & $ Y_{T}=\xi $, \label{MFGY}\\
	\mathrm{d}P_{t} = \Big(h(t)\lambda_tZ_{t} - (\eta(t) - V(t)h^2(t)\lambda_t^2)P_t\Big)\mathrm{d}t + Q_{t}\mathrm{d}I^{\beta^*}_{t} & $ P_{T}=0  \notag$,\\  
	\mathrm{d}X_t = \eta(t)X_t\mathrm{d}t + h(t)V(t)\lambda_t\mathrm{d}I^{\beta^*}_{t} & $ X_0=m_0 $, \notag \\
	\mathrm{d}B_t =  (h(t)\lambda_tX_{t} + \beta^*_{t})\mathrm{d}t + \mathrm{d}I^{\beta^*}_t & $ B_0 = 0 $, \label{MFGO}
	\end{numcases}
	has a solution, denoted 
	\begin{equation*}
	(Y^*,Z^*,P^*,Q^*,X^*,B^*)\in\left(\mathbb{H}_T^2\left(\dbP^{\b^*}\right)\right)^6.
	\end{equation*}
	Besides, for all $ t\in[0,T] $, for all $ b\in A $, the optimal control $ \beta^* $ must verify the local incentive constraint:	
	\begin{equation}\label{betaconstraintMFG}
	(Z^*_t + V(t)h(t)\lambda_tP^*_t - \beta^*_t)(b - \beta^*_t)\leq0. 
	\end{equation}
\end{prop}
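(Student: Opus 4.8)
The plan is to read Proposition~\ref{prop:sufficientMFG} as the specialization of the general Necessary Condition, Theorem~\ref{necessaryCondition}, to the linear--Gaussian filtered system at hand, with the observed population mean $\l\in C([0,T],\dbR)$ frozen as an exogenous deterministic function. The crucial first observation is that, once $\l$ is fixed, the coupling between the Agents disappears and $V_A(\xi,\l)$ becomes a single-Agent degenerate control problem of exactly the type~\eqref{eq:agentprob}: the Kalman--Bucy filter of Proposition~\ref{prop:necessaryMFG} provides a finite-dimensional fully observable state $(X,V)$, with $V$ deterministic, so that~\eqref{eq:filtreMFG} puts the dynamics in the abstract form~\eqref{eq:X}--\eqref{eq:B}. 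I would therefore begin by identifying the coefficients: writing the filter equation for $X$ against $\mathrm{d}B$ via $\mathrm{d}I^\b=\mathrm{d}B-(h\l X+\b)\mathrm{d}t$, the degenerate state has volatility $\si_t=h(t)V(t)\l_t$ and $\mathrm{d}B$-drift $\eta(t)x-h(t)V(t)\l_t(h(t)\l_t x+b)$, the observable drift is $b_t(x,b)=h(t)\l_tx+b$, and the running cost is $\tfrac12 c(t,b)$.

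With this dictionary in hand, the next step is to verify Assumptions~\ref{A1} and~\ref{A2}. Since all coefficients are affine in $x$ with deterministic, continuous slopes, the Lipschitz and $\cC^2$ requirements of Assumption~\ref{A1} hold trivially; and because the system is linear, Assumption~\ref{A2} only asks that $\cE^\b$ be a positive martingale, which follows from the linear--Gaussian structure and the compactness of $A$. Theorem~\ref{necessaryCondition} then applies. It remains to specialize the Hamiltonian~\eqref{eq: hamiltonian} to these coefficients---exactly as in Section~\ref{Section4.1}, now carrying the factor $\l_t$ in $h(t)\l_t$---and to compute its partials. Using $\partial_x$ of the filter drift, $\eta(t)-h^2(t)V(t)\l_t^2$, and $\partial_x$ of the observable drift, $h(t)\l_t$, the adjoint~\eqref{adjointproc} collapses to the $P$-equation of~\eqref{MFGY}--\eqref{MFGO} with drift $h(t)\l_tZ_t-(\eta(t)-V(t)h^2(t)\l_t^2)P_t$; the forward equations reproduce the $X$- and $B$-dynamics driven by $I^{\b^*}$; and the backward equation~\eqref{stateequation01} becomes~\eqref{MFGY} verbatim.

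To obtain the local incentive constraint~\eqref{betaconstraintMFG}, I would exploit the convexity of $A$: the pointwise optimality of $\b^*_t$ over $A$ (the analog of~\eqref{opitmalControlEquation_partial}) is equivalent to a first-order variational inequality. The control $b$ enters the Hamiltonian only through the observable drift $h\l x+b$, through the filter drift via the term $-hV\l\,b$, and through the running cost $\tfrac12 c(t,b)$; collecting these three contributions, the variational inequality reads $\bigl(Z^*_t+V(t)h(t)\l_tP^*_t-\tfrac12\partial_bc(t,\b^*_t)\bigr)(b-\b^*_t)\le 0$ for all $b\in A$, which is exactly~\eqref{betaconstraintMFG} once one uses the quadratic running cost $c(t,b)=b^2$ of this linear--quadratic application, so that $\tfrac12\partial_bc(t,\b^*_t)=\b^*_t$.

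The step I expect to be most delicate is not the algebra but the legitimacy of the frozen-$\l$ reduction together with the integrability bookkeeping. One must justify that, for a fixed deterministic $\l$, the conditional law of $\hat X$ is indeed the finite-dimensional Gaussian filter of Proposition~\ref{prop:necessaryMFG}, and that the resulting coefficients---whose observable drift $h\l x+b$ is \emph{unbounded} in $x$---still place the solution $(Y^*,Z^*,P^*,Q^*,X^*,B^*)$ of the FBSDE in $\bigl(\dbH^2_T(\dbP^{\b^*})\bigr)^6$. Here the linear--Gaussian structure is essential: it makes $X$ and $B$ Gaussian with finite moments of all orders, so that the $\dbH^2$ estimates from the proof of Theorem~\ref{necessaryCondition}---martingale representation for $(Y,Z)$, the linear BSDE for $(P,Q)$, and the forward SDE for $X$---carry over, the extra component $B$ being trivially square-integrable.
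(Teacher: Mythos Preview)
Your approach is correct and coincides with the paper's: the paper does not give a separate proof of this proposition, but introduces it with the sentence ``We now recall the necessary and sufficient condition for the Agent's problem in this specific case,'' i.e.\ it is stated as the direct specialization of Theorem~\ref{necessaryCondition} to the filtered linear system of Proposition~\ref{prop:necessaryMFG} with $\lambda$ frozen, exactly as you carry out. Your identification of the coefficients, verification that Assumptions~\ref{A1}--\ref{A2} reduce to the linear case (so only the martingale property of $\cE^\b$ is needed), and computation of $\partial_xH$ and $\partial_aH$ are all in line with what the paper does implicitly in Section~\ref{Section4.1} for the single-Agent version.

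One comment on the local incentive constraint: your derivation
\[
\Bigl(Z^*_t+V(t)h(t)\lambda_tP^*_t-\tfrac12\partial_bc(t,\beta^*_t)\Bigr)(b-\beta^*_t)\le 0
\]
is the correct general form, and you rightly note that~\eqref{betaconstraintMFG} as printed matches it only for the quadratic cost $c(t,b)=b^2$ (consistent with the factor $\tfrac12$ in the MFG Agent's objective). This is a presentational inconsistency in the paper rather than a flaw in your argument: the subsequent theorem on the Principal's problem reverts to a general convex $c$ via $\partial_bc$, so~\eqref{betaconstraintMFG} should be read with $\tfrac12\partial_bc(t,\beta^*_t)$ in place of $\beta^*_t$, and your version is the right one to carry forward.
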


\ms

\begin{prop}
	Let $ (Y^*,Z^*,P^*,Q^*,X^*,B^*) $ be a solution to the system \eqref{MFGY}-\eqref{MFGO} with $\b^*$ satisfying \eqref{betaconstraintMFG}.  Define $ \xi:= Y_T^*$. Then $ \beta^* $ is an optimal control if for all $ t\in[0,T] $,
	\begin{equation}\label{Qcondition_MF}
	0\leq\frac{Q^*_t}{2h(t)\lambda_t}\leq\inf_{b\in A }\left\{\partial^2_{bb} c(t,b)\right\}.
	\end{equation}
\end{prop}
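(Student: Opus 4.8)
The plan is to mirror the proof of the single-Agent sufficient condition, Theorem~\ref{sufficientCondition_partial}, replacing $h(t)$ by $h(t)\lambda_t$ throughout, since for a \emph{fixed} population mean $\lambda$ the filtered system \eqref{eq:filtreMFG} is exactly the degenerate controlled system to which the general sufficient condition Theorem~\ref{sufficientCondition} applies. Concretely, I would prove directly that $J_A(\beta)\le J_A(\beta^*)=Y^*_0$ for every competitor $\beta\in\cU$, where $\xi=Y^*_T$. First I would expand $\xi=Y^*_0+\int_0^T c(t,\beta^*_t)\,\mathrm{d}t+\int_0^T Z^*_t\,\mathrm{d}I^{\beta^*}_t$ from \eqref{MFGY} and rewrite $J_A(\beta)$ under $\dbP^{\beta}$. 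Writing $\Delta X:=X-X^*$ and $\Delta\beta:=\beta-\beta^*$, with $X$ the filter driven by $\beta$, the identity $\mathrm{d}I^{\beta^*}_t=(h(t)\lambda_t\Delta X_t+\Delta\beta_t)\,\mathrm{d}t+\mathrm{d}I^{\beta}_t$ gives, after discarding the $\dbP^{\beta}$-martingale part,
\[
J_A(\beta)-J_A(\beta^*)=\dbE^{\beta}\Big[\int_0^T\big(c(t,\beta^*_t)-c(t,\beta_t)+Z^*_t(\Delta\beta_t+h(t)\lambda_t\Delta X_t)\big)\,\mathrm{d}t\Big].
\]
The key structural observation, special to this linear filter, is that the Brownian terms in $\mathrm{d}X$ and $\mathrm{d}X^*$ cancel, so $\Delta X$ solves the \emph{finite-variation} linear ODE $\mathrm{d}\Delta X_t=\big((\eta(t)-h^2(t)V(t)\lambda_t^2)\Delta X_t-h(t)V(t)\lambda_t\Delta\beta_t\big)\,\mathrm{d}t$ with $\Delta X_0=0$.

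Next I would bring in the adjoint pair $(P^*,Q^*)$. Applying It\^o's formula to $P^*_t\Delta X_t$, using the $P^*$-equation of \eqref{MFGY}-\eqref{MFGO}, the finite-variation dynamics of $\Delta X$, and $P^*_T=0$, $\Delta X_0=0$, the terms in $(\eta-h^2V\lambda^2)P^*\Delta X$ cancel, and taking $\dbE^{\beta}$ (the $\mathrm{d}I^{\beta^*}$-integral being re-expressed via the drift above) yields the substitution
\[
\dbE^{\beta}\Big[\int_0^T h(t)\lambda_tZ^*_t\Delta X_t\,\mathrm{d}t\Big]=\dbE^{\beta}\Big[\int_0^T\big(h(t)V(t)\lambda_t P^*_t\Delta\beta_t-Q^*_t\Delta X_t(h(t)\lambda_t\Delta X_t+\Delta\beta_t)\big)\,\mathrm{d}t\Big].
\]
Inserting this identity gives $J_A(\beta)-J_A(\beta^*)=\dbE^{\beta}\big[\int_0^T\Psi_t\,\mathrm{d}t\big]$ with
\[
\Psi_t=\phi_t(\beta^*_t)-\phi_t(\beta_t)-Q^*_t h(t)\lambda_t(\Delta X_t)^2-Q^*_t\Delta X_t\Delta\beta_t,
\]
where $\phi_t(b):=c(t,b)-b\,(Z^*_t+h(t)V(t)\lambda_t P^*_t)$ is precisely the $b$-dependent part of the Hamiltonian, minimised at $\beta^*_t$ over $A$.

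Finally I would estimate $\Psi_t$. The local incentive (first-order) optimality \eqref{betaconstraintMFG} of $\beta^*_t$ for $\min_{b\in A}\phi_t(b)$ is the variational inequality $\phi_t'(\beta^*_t)\Delta\beta_t\ge0$; combined with the $\kappa$-strong convexity coming from Assumption~\ref{A3}, with $\kappa:=\inf_{b\in A}\partial^2_{bb}c(t,b)$, this gives $\phi_t(\beta^*_t)-\phi_t(\beta_t)\le-\tfrac{\kappa}{2}(\Delta\beta_t)^2$. Hence
\[
\Psi_t\le-\Big(h(t)\lambda_t Q^*_t(\Delta X_t)^2+Q^*_t\Delta X_t\Delta\beta_t+\tfrac{\kappa}{2}(\Delta\beta_t)^2\Big),
\]
and the bracket is a quadratic form in $(\Delta X_t,\Delta\beta_t)$ with matrix $\begin{psmallmatrix} h(t)\lambda_t Q^*_t & \tfrac12 Q^*_t\\ \tfrac12 Q^*_t & \tfrac{\kappa}{2}\end{psmallmatrix}$. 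Its positive semidefiniteness amounts to $h(t)\lambda_t Q^*_t\ge0$ together with the determinant inequality $2h(t)\lambda_t Q^*_t\kappa\ge(Q^*_t)^2$, which is exactly the two-sided bound \eqref{Qcondition_MF}. Thus $\Psi_t\le0$ pointwise, so $J_A(\beta)\le J_A(\beta^*)$ for all $\beta\in\cU$, $\beta^*$ is optimal, and consequently $\xi=Y^*_T\in\Upxi$.

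The main obstacle I anticipate is analytic rather than algebraic: justifying the two changes of measure and the vanishing of the stochastic integrals requires uniform integrability of the Radon--Nikodym densities and of $(Z^*,P^*,Q^*,\Delta X)$ under $\dbP^{\beta}$, for which I would rely on the compactness of $A$, the $\big(\dbH^2_T(\dbP^{\beta^*})\big)^6$-membership of the solution, and the equivalence of the measures in $\cP(\lambda)$. A secondary point of care is the sign bookkeeping in passing from the minimisation-convention Hamiltonian of Section~\ref{Section4.1} to the quadratic form above, which is what turns the diagonal entry into the constraint $Q^*_t/(2h(t)\lambda_t)\ge0$ rather than its opposite.
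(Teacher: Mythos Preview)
Your proposal is correct and follows exactly the route the paper intends: the proposition is stated without a dedicated proof because it is the specialization of Theorem~\ref{sufficientCondition} (equivalently Theorem~\ref{sufficientCondition_partial} with $h(t)$ replaced by $h(t)\lambda_t$), and you have carried out that specialization explicitly. Your computation of $J_A(\beta)-J_A(\beta^*)$ via the $Y^*$-representation, the It\^o expansion of $P^*\Delta X$ using that $\Delta X$ is finite-variation, and the reduction to positive semidefiniteness of the $2\times2$ quadratic form are precisely the concrete steps that make the abstract concavity of $\cG$ in Theorem~\ref{sufficientCondition} explicit in this linear setting, and they recover condition~\eqref{Qcondition_MF} on the nose.
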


\subsubsection{Solving the Principal's Problem}

Define the following subsets of $ \mathbb{H}_T^2(\dbP) $:
\beaa
&\ol\cZ:=\Big\{Z :\exists \dbP^\b\in\mathcal{P}~s.t. ~~\mbox{\eqref{MFGY}-\eqref{MFGO} with $\l=\dbE[B]$ have an $\dbF^B$-adapted solution $ (P,Q,X) $ in $ \mathbb{H}_T^2(\dbP^\b)^3 $}\Big\},&\\
&\ul\cZ:=\Big\{Z\in \ol\cZ : ~~\mbox{$Q$ satisfies \eqref{Qcondition_MF}}\Big\},&
\eeaa
and define
\begin{equation*}
\ol{\Upxi} := \left\{  Y_T | Y_0\geq R, Z\in\ol\cZ \right\}\q\mbox{and}\q
\ul{\Upxi} := \left\{  Y_T | Y_0\geq R, Z\in\ul\cZ \right\},
\end{equation*}
where $ Y $ is defined by the equation \eqref{MFGY}.
As before, we have $\ul\Upxi\subset \Upxi\subset\ol{\Upxi} $ and the enlarged optimization of the Principal is given by
\begin{equation}\label{enlargedProblemMFG}
\bar{V}_P :=\sup_{Y_0\geq R}\sup_{Z\in\ol\cZ} \bar J_P(Y_0,Z) := \sup_{Y_0\geq R}\sup_{Z\in\ol\cZ}\mathbb{E}^{\beta^{*}}\left[B_T - Y_T\right].
\end{equation}

\no	Denote
\begin{equation*}
k(t) := h(t)\dbE^{\b^*}[X_t] = h(t) m_0 e^{\int_{0}^{t}\eta(s)ds}\q\mbox{and}\q
\rho_t = e^{\int_{t}^{T}k(s)\mathrm{d}s} - 1.
\end{equation*}

\begin{thm}
	Assume that Assumption \ref{A3} holds true and assume that $ \partial_bc(t,\cdot) $ is surjective. Denote $ \bar{\beta}^*(t) = (\partial_bc(t,\cdot))^{-1}(\rho_t + 1) $. There exists an optimal contract for the Principal's problem in the mean field setting, which is given by
	\begin{equation*}
	\xi^* = \int_{0}^{T}c(t, \bar\b^*(t))\mathrm{d}t + (\bar\b^*(t) - V(t)h(t)\bar B_t P_t)(\mathrm{d}B_t - h(t)\bar B_t \bar{X}_t\mathrm{d}t),
	\end{equation*}
	where $ (\bar{X},\bar{B},V) $ are solutions of the following system of forward equations:
	\begin{numcases}{}
	\mathrm{d}\bar{X}_{t} = \eta(t)\bar X_{t}\mathrm{d}t + h(t)V(t)\bar{B}_t(\mathrm{d}B_t - (h(t)\bar{B}_t\bar{X}_{t} + \bar\b^*(t))\mathrm{d}t) & $ \bar{X}_0 = m_0 $, \\
	\frac{\mathrm{d}\bar{B}_t}{\mathrm{d}t} = k(t)\bar{B}_t + \bar\b^*_t & $ \bar{B}_0 = 0 $,\\
	\dot V(t) = 2\eta(t)V(t) - h^2(t)V^2(t)\bar{B}_t^2 + \sigma(t) & $ V_0 = V_0 $.
	\end{numcases}
\end{thm}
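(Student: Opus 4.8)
The plan is to run the same two-sided argument as in the single-Agent computation in the proof of the theorem of Section \ref{Section4.1}, adapted to the mean-field fixed-point structure. I begin with the \emph{upper bound}. For any $Z\in\ol\cZ$ with associated equilibrium $\l$ and optimal response $\b^*$, I would use the backward equation \eqref{MFGY} and the dynamics \eqref{MFGO} to write, after the $\mathrm{d}I^{\b^*}$-martingales are removed under $\dbP^{\b^*}$,
\[
\bar J_P(Y_0,Z)=\dbE^{\b^*}[B_T]-Y_0-\dbE^{\b^*}\Big[\int_0^T c(t,\b^*_t)\,\mathrm{d}t\Big].
\]
The observation that decouples the problem is that $\dbE^{\b^*}[X_t]=m_0e^{\int_0^t\eta(s)\mathrm{d}s}=m(t)$ solves $\tfrac{d}{dt}\dbE^{\b^*}[X_t]=\eta(t)\dbE^{\b^*}[X_t]$ and is therefore \emph{independent of $\l$}; hence $k$ and $\rho$ are fixed deterministic functions. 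Taking expectations in \eqref{MFGO} and imposing $\l_t=\dbE^{\b^*}[B_t]$ gives the linear ODE $\dot{\bar B}_t=k(t)\bar B_t+\dbE^{\b^*}[\b^*_t]$, which integrates to $\dbE^{\b^*}[B_T]=\int_0^T(\rho_t+1)\dbE^{\b^*}[\b^*_t]\,\mathrm{d}t$. Substituting yields $\bar J_P(Y_0,Z)=\dbE^{\b^*}[\int_0^T((\rho_t+1)\b^*_t-c(t,\b^*_t))\mathrm{d}t]-Y_0$, and the pointwise maximisation of the strictly concave map $b\mapsto(\rho_t+1)b-c(t,b)$ (Assumption \ref{A3}), together with $Y_0\ge R$, gives $\bar V_P\le\int_0^T((\rho_t+1)\bar\b^*(t)-c(t,\bar\b^*(t)))\mathrm{d}t-R$, the maximiser being exactly $\bar\b^*(t)=(\partial_bc(t,\cdot))^{-1}(\rho_t+1)$ (well-defined by surjectivity and strict convexity).

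For the matching \emph{lower bound} I would exhibit an implementable contract attaining this value. Taking $\b^*\equiv\bar\b^*$ deterministic, the local incentive constraint \eqref{betaconstraintMFG} forces $Z^*_t=\bar\b^*(t)-V(t)h(t)\l_tP^*_t$; inserting this into the adjoint equation collapses its drift to $h(t)\l_t\bar\b^*(t)-\eta(t)P_t$, so the adjoint BSDE has the deterministic solution $P^*_t=-\int_t^Te^{\int_t^s\eta(r)\mathrm{d}r}h(s)\l_s\bar\b^*(s)\,\mathrm{d}s$ with $Q^*_t\equiv0$. Since $Q^*\equiv0$ trivially satisfies \eqref{Qcondition_MF}, the sufficient condition holds and $\bar\b^*$ is genuinely the Agent's optimal response. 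I would then solve the triangular forward system --- first the linear ODE for $\bar B=\dbE^{\b^*}[B]$, then the Riccati ODE for $V$, then the filter SDE for $\bar X$ --- and check that the fixed point $\l=\bar B$ is automatically realised, precisely because $\dot{\bar B}_t=k(t)\bar B_t+\bar\b^*(t)$ is the expectation of the $B$-dynamics. Setting $Y_0=R$ and $\xi^*:=Y^*_T$ through \eqref{MFGY} then gives $\xi^*\in\ul\Upxi$, with participation binding and $\bar J_P(R,Z^*)$ equal to the upper bound; by Theorem \ref{principalProblem} and $\ul\Upxi\subset\Upxi\subset\ol\Upxi$ this proves $V_P=\bar V_P$ and the optimality of $\xi^*$.

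The main obstacle is the mean-field consistency: unlike the single-Agent case, the equilibrium $\l$ feeds back into the filter variance $V$ and into all the FBSDE coefficients, so a priori the fixed-point equation $\l=\Phi(\xi,\l)$ could couple everything. The step to handle carefully is verifying that this coupling is in fact triangular --- that $k$, $\rho$, and hence $\bar\b^*$, do not depend on $\l$, which hinges on $\dbE^{\b^*}[X_t]=m(t)$ being $\l$-free --- so that the candidate control can be fixed first and the equilibrium solved for afterwards. Once this is in place the remaining verifications (integrability of the stochastic integrals under $\dbP^{\b^*}$ so the martingale terms vanish, solvability of the Riccati equation for $V$ on $[0,T]$, and matching the explicit contract formula) are routine.
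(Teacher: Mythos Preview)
Your proposal is correct and follows essentially the same two-step strategy as the paper: an upper bound on $\bar V_P$ followed by an explicit construction showing the bound is attained with $Q^*\equiv 0$, so that the sufficient condition \eqref{Qcondition_MF} holds and Theorem \ref{principalProblem} applies.

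The one genuine (though minor) difference lies in how you obtain the upper bound. The paper first applies Jensen's inequality to pass from $\dbE^{\b^*}[\b^*_t-c(t,\b^*_t)]$ to $\bar\b_t-c(t,\bar\b_t)$, thereby reducing to the deterministic control problem $\sup_{\bar\b}\int_0^T(k(t)\bar B_t+\bar\b_t-c(t,\bar\b_t))\,\mathrm{d}t$ with state equation $\dot{\bar B}_t=k(t)\bar B_t+\bar\b_t$, and then invokes the Pontryagin maximum principle to identify $\bar\b^*(t)$ via the adjoint $\rho$. You instead integrate the linear ODE for $\bar B$ explicitly to rewrite $\dbE^{\b^*}[B_T]=\int_0^T(\rho_t+1)\dbE^{\b^*}[\b^*_t]\,\mathrm{d}t$, which lets you bound the integrand $(\rho_t+1)b-c(t,b)$ pointwise without separating out a Jensen step or appealing to Pontryagin. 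Your route is slightly more elementary and makes the role of $\rho_t+1=e^{\int_t^T k(s)\mathrm{d}s}$ as the Green's kernel of the $\bar B$-equation transparent; the paper's route has the advantage of fitting the standard optimal-control template. The constructions in the lower-bound step (forcing $Z^*$ through the incentive constraint, collapsing the adjoint BSDE to a deterministic ODE with $Q^*=0$, and verifying the fixed point $\l=\bar B$) are identical.
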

\begin{proof}
	For notation simplicity, denote 
	\begin{equation*}
	\bar{B_t}:=\mathbb{E}^{\beta^*}[B_t],\q
	\bar{\beta}_t = \mathbb{E}^{\beta^*}[\beta^*_t].
	\end{equation*}
	By \eqref{MFGO}, at equilibrium we have 
	\begin{equation}\label{Obardynamic}
	\frac{\mathrm{d}\bar{B_t}}{\mathrm{d}t} = k(t)\bar{B_t}+\bar{\beta}_t.
	\end{equation}
	
	\no Replacing $ B_T $ and $ Y_T = \xi$ in \eqref{enlargedProblemMFG} by the representations in \eqref{MFGY} and \eqref{MFGO}, we get
	\beaa
	\bar{V}_P
	&=& -R + \sup_{Z\in\ol\cZ}\mathbb{E}^{\beta^*}\left[ \int_{0}^{T}\left(h(t)\bar B_tX_t + \beta^*_t-c(t, \beta^*_t)\right)\mathrm{d}t \right] \\
	&\leq& -R + \sup_{Z\in\ol\cZ}\left\{ \int_{0}^{T}\left(k(t) \bar B_t+ \bar\b_t -c(t,\bar\b_t)\right)\mathrm{d}t \right\}.
	\eeaa 
	Notice that the inequality above becomes equality if and only if  $ \beta^* $ is deterministic.
	
	\ms

	\no{ \it Step 1}.\q Solve the deterministic control problem 
	\begin{equation}\label{deterministicControl}
	\sup_{\bar{\beta}}\left\{ \int_{0}^{T}\left(k(t)\bar{B_t} + \bar{\beta}_t-c(t,\bar\b_t)\right)\mathrm{d}t \right\},
	\end{equation}
	where $\bar{B}$ satisfies \eqref{Obardynamic}. It follows from the Pontryagin maximum principle that the optimal control $\bar\b^*$ is:
	\begin{equation}\label{optimalControl_MF}
	\bar{\beta}^*(t) = (\partial_bc(t,\cdot))^{-1}(\rho_t + 1),
	\end{equation}
	where $\rho$ is the adjoint process:
	\begin{equation*}
	\frac{\mathrm{d}\rho_t}{\mathrm{d}t} = -k(t)(\rho_t+1),\q \rho_T=0,
	\end{equation*}
	
	\ms
	
	\no{ \it Step 2}.
	$\bar V_P$ is attained if the optimal control of the Agent is deterministic and is given by \eqref{optimalControl_MF}, now we need to check if such a contract exists and belongs to $\ul\Upxi$. 
	
	Under the control $\bar \b^*(t)$, the process $\bar{B}$ follows:
	\begin{equation*}
	\frac{\mathrm{d}\bar B_t}{\mathrm{d}t} = k(t)\bar B_t + (\partial_bc(t,\cdot))^{-1}(\rho_t + 1).
	\end{equation*}	
	In order for $\bar \b^*(t)$ to be the optimal control of the agent, by \eqref{betaconstraintMFG} we must have
	\begin{equation*}
	Z^*_t  = (\partial_bc(t,\cdot))^{-1}(\rho_t + 1) - V(t)h(t)\bar B_t P_t.
	\end{equation*}
	Inserting the above equality into the equation of $ P $ and we get
	\begin{equation*}
	\mathrm{d}P_{t} = \Big(h(t)\bar B_t\bar\b^*(t) - \eta(t)P_t \Big)\mathrm{d}t + Q_{t}\mathrm{d}I^{\beta^*}_{t},
	\end{equation*}
	and a trivial solution of the BSDE is given by $ (\bar{P}, 0) $, where $ \bar{P} $ is the solution of the  ODE:
	\begin{equation*}
	\frac{\mathrm{d}\bar{P}_{t}}{\mathrm{d}t} = h(t)\bar B_t \bar\b^*(t) - \eta(t)\bar{P}_t \q \text{with} \q \bar P_T=0.
	\end{equation*}
	So far we find a solution to the system \eqref{MFGY}-\eqref{MFGO}. Define the contract:
	\begin{equation*}
	\xi := \int_{0}^{T}c(t, \bar\b^*(t))\mathrm{d}t + (\bar\b^*(t) - V(t)h(t)\bar B_t P_t)(\mathrm{d}B_t - h(t)\bar B_t \bar{X}_t\mathrm{d}t),
	\end{equation*}
	where $ \bar{X} $ is the solution of the following SDE:
	\begin{equation*}
	\mathrm{d}X_{t} = \eta(t)X_{t}\mathrm{d}t + h(t)V(t)\bar B_t (\mathrm{d}B_t - (h(t)\bar B_tX_{t} + \bar \beta^*(t))\mathrm{d}t),\q X_0 = m_0.
	\end{equation*}
	Clearly, $\bar \b^*(t)$ is the best effort of the agent under the contract $\xi$, and	the sufficient condition for the Agent's problem is verified since $ Q=0 $, i.e. $\xi\in \ul\Upxi$. We conclude by using Theorem \ref{principalProblem}.
	\qed
\end{proof}

\section{Appendix}
In this section we shall detail the additional hypothesis and the proof of necessary condition for the Agent's problem in the case of exponential utility in the partially observed linear system. The controlled system is given by \eqref{filterProcess_partial}-\eqref{observeproc_partial}. The Agent's problem that we consider becomes
\begin{equation}\label{eq:agentprob_partial}
V_{A} :=\sup_{\nu\in \cU} J_A(\nu):= \sup_{\nu\in \cU}\mathbb{E}^{\nu}\left[-\exp\left(-\lambda\left(\xi - \int_{0}^{T}c_{t}(\nu_t)\mathrm{d}t\right)\right)\right],
\end{equation}

To guarantee integrability, we shall need additional assumptions on the set of admissible contracts. For convenience, we say that a strictly positive local martingale $ (M_t)_{t\in[0,T]} $ satisfies the condition $ (H) $ if it is uniformly integrable and there exists $ p\in(1,\infty) $ and $ K_p>0 $ constant depending only on $ p $ such that for every stopping time $ \tau $,
\begin{equation}\label{H}
K_p\mathbb{E}[M^{1/p}_T|\mathcal{F}_\tau] \geq M^{1/p}_\tau.\tag{H}
\end{equation}

We recall that a contract $ \xi $ is implementable if there exists at least one optimal control for the Agent's problem. In the rest of the paper, we constrain our study on the following set of contracts:
\begin{equation}\label{admissibleContracts}
\Xi := \left\{ \xi: V_A(\xi)\geq R\text{ and }\left( \mathbb{E}^{\nu*}[\exp(-\lambda\xi)|\mathcal{F}^B_t] \right)_{t\in[0,T]}\text{ satisfied the condition \eqref{H}} \right\},
\end{equation}
where $ \nu^*\in\cM^*(\xi) $.

\begin{rem}\label{rem:condH}
	\begin{enumerate}
		\item Let $ M $ be a martingale and denote $ \mathcal{E}(M) $ the Doléan-Dade exponential. As given in the Corollary 3.4 in \cite{Kazamaki}, $ \mathcal{E}(M) $ satisfying the condition \eqref{H} is equivalent to $ M\in BMO $.
		
		\item Let $ \xi\in\Xi $ and $ \nu^* $ be an optimal control of the Agent's problem. For any control $ \nu\in\cU $, denote
		\begin{equation*}
		M_t := \mathbb{E}^{\nu^*}\left[\exp\left(-\lambda\left(\xi - \int_{0}^{T}c_{s}(\nu_s)\mathrm{d}s\right)\right) \Big| \mathcal{F}^B_t \right].
		\end{equation*}
		Since $ \nu $ is bounded, so is $ \exp(\lambda\int_{0}^{T}c_{s}(\nu_s)\mathrm{d}s) $. One can show easily that $ (M_t)_{t\in[0,T]} $ satisfies the condition \eqref{H}.
	\end{enumerate}
\end{rem}

Define the  Hamiltonian function:
\begin{equation*}
H_t(x, a, b, p, q, z) := \Big(\eta(t)x + a - h(t)V(t)(h(t)x + b)\Big)p - \Big( h(t)x + b \Big)z + c_t(a,b) + \lambda xqz.
\end{equation*}
\begin{rem}
	Compared to the definition of the Hamiltonian functional given in \eqref{eq: hamiltonian}, the additional term $ \lambda xqz $ in the above definition is due to the exponential utility in the Agent's problem. If the Agent is risk-neutral, i.e. $ \lambda=0 $, this term disappears.
\end{rem}

Now we can give the necessary condition for the Agent's problem.

\begin{thm}\label{necessaryCondition_partial}
	For a contract $ \xi\in\Upxi $, let $ \nu^*\in\mathcal{M}^*(\xi) $. Then the following FBSDE
	\begin{numcases}{}
	\mathrm{d}Y_{t} = \Big(c_{t}(\nu^*_t) + \frac{1}{2}\lambda Z^2_t \Big)\mathrm{d}t + Z_{t}\mathrm{d}I^{\nu^*}_{t} & $ Y_{T}=\xi $, \label{stateequation01_partial}  \\
	\mathrm{d}P_{t} = -\partial_x H_t(X_t, \nu^*_t,P_t,Q_t,Z_t)  \mathrm{d}t + Q_{t}\mathrm{d}I^{\nu^*}_{t} & $ P_{T}=0 $,  \label{adjointproc_partial} \\
	\mathrm{d}X_t = \partial_p H_t(X_t, \nu^*_t,P_t,Q_t,Z_t) \mathrm{d}t + h(t)V(t)\mathrm{d}B_t & $ X_0=m_0 $, \label{stateequation02_partial}
	\end{numcases}
	with $I^{\nu^*}$ defined as in \eqref{observeproc_partial}, $V$ defined as in \eqref{eq:varianceproc_partial} and 
	\beaa
	\nu^*_t = (\alpha^*_t,\beta^*_t) = \argmin_{(a,b)\in A }H_t(X_t,a,b,P_t,Q_t,Z_t),
	\eeaa
	has an $\dbF$-adapted solution, denoted by
	\begin{equation*}
	(Y^*,Z^*,P^*,Q^*,X^*) \in \mathbb{H}^2(\mathbb{P}^{\nu^*}) \times BMO\times \mathbb{H}^2(\mathbb{P}^{\nu^*})\times\mathbb{H}^2(\mathbb{P}^{\nu^*})\times \mathbb{H}^2(\mathbb{P}^{\nu^*}).
	\end{equation*}
\end{thm}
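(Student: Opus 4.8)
The plan is to follow the five-step scheme used in the proof of Theorem~\ref{necessaryCondition}, replacing the $L^p$-estimates furnished by Assumption~\ref{A2} with the $BMO$-estimates afforded by condition~\eqref{H}. For \emph{Step~1} (the backward equation), given an optimal $\nu^*\in\cM^*(\xi)$ I would introduce the certainty-equivalent value process through the strictly positive $\mathbb{P}^{\nu^*}$-martingale
\[
M_t := \mathbb{E}^{\nu^*}_t\Big[\exp\Big(-\lambda\big(\xi-\int_0^T c_s(\nu^*_s)\,\mathrm{d}s\big)\Big)\Big].
\]
Since $c$ is bounded on the compact set $A$ and $\xi\in\Xi$, the process $M$ satisfies~\eqref{H}; writing $M=\cE(N)$ for its stochastic logarithm $N$, the first item of Remark~\ref{rem:condH} (Kazamaki, Corollary~3.4) identifies~\eqref{H} with $N\in BMO$. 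The representation of Lemma~\ref{EMRT} gives $\mathrm{d}M_t=\widetilde Z_t\,\mathrm{d}I^{\nu^*}_t$, so that $Z_t:=-\tfrac1\lambda\widetilde Z_t/M_t$ inherits this property, $Z\in BMO$. Setting $Y_t:=\int_0^t c_s(\nu^*_s)\,\mathrm{d}s-\tfrac1\lambda\log M_t$ and applying Itô's formula to $\log M$ yields exactly~\eqref{stateequation01_partial}, the term $\tfrac12\lambda Z^2$ being produced by the logarithm. The forward equation~\eqref{stateequation02_partial} is nothing but the Kalman--Bucy filter~\eqref{filterProcess_partial} rewritten through $\mathrm{d}I^{\nu^*}_t=\mathrm{d}B_t-(hX_t+\beta^*_t)\,\mathrm{d}t$, hence already well posed.

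For \emph{Steps~2--4} (variational calculus and the adjoint system) I would perturb $\nu^\eps:=\nu^*+\eps\Delta\nu$ with $\Delta\nu=\nu-\nu^*$, $\nu\in\cU$, and compute the one-sided derivative of $J_A$ at $\eps=0$, using $\xi=Y_T$ and the Girsanov density $\mathrm{d}\mathbb{P}^{\nu^\eps}/\mathrm{d}\mathbb{P}^{\nu^*}$ to express everything under $\mathbb{P}^{\nu^*}$ and $I^{\nu^*}$. Linearising both the exponential and the density and writing $\Delta\widehat X$ for the linearised filter perturbation (the analogue of~\eqref{eq: linear SDE}), the leading order is a linear functional of $\Delta\nu$ and $\Delta\widehat X$. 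To remove the $\Delta\widehat X$ contribution I would introduce the adjoint pair $(P,Q)$ solving~\eqref{adjointproc_partial}; with the Hamiltonian now carrying the extra term $\lambda x q z$, its driver contains the coupling $\lambda Q_t Z_t$. Because $Z\in BMO$, the Doléans--Dade exponential of $\int\lambda Z\,\mathrm{d}I^{\nu^*}$ is uniformly integrable and satisfies the reverse Hölder inequality (equivalently~\eqref{H}), so a further Girsanov transform absorbs $\lambda QZ$ and turns~\eqref{adjointproc_partial} into a well-posed linear BSDE with $(P,Q)\in(\mathbb{H}^2(\mathbb{P}^{\nu^*}))^2$. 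Applying Itô's formula to $P_t\Delta\widehat X_t$, exactly as in Step~4 of the proof of Theorem~\ref{necessaryCondition}, collapses the expansion to
\[
0\ge\mathbb{E}^{\nu^*}\Big[\int_0^T \partial_{(a,b)}H_t\big(X_t,\nu^*_t,P_t,Q_t,Z_t\big)\cdot\Delta\nu_t\,\mathrm{d}t\Big],
\]
and the arbitrariness of $\Delta\nu$ over the convex set $\cU$ delivers the stated $\argmin$ characterisation of $\nu^*$.

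The main obstacle is \emph{Step~5}, namely integrability. In contrast with the risk-neutral proof, the exponential utility destroys the uniform $L^p$ bounds of Assumption~\ref{A2}, so the remainder terms analogous to~\eqref{eq:Reps} and~\eqref{eq: epsilon} can no longer be controlled by Hölder inequalities alone. The remedy is to carry out every estimate inside the $BMO$ framework: condition~\eqref{H} guarantees that all the density processes appearing above obey a reverse Hölder inequality (the John--Nirenberg bound for $BMO$), which supplies the uniform moment control needed to show $\limsup_{\eps\to0}\eps^{-1}\mathbb{E}^{\nu^*}[\int_0^T(|\cR^\eps_t|+|\widehat{\cR}^\eps_t|)\,\mathrm{d}t]<\infty$ and to justify dominated convergence as $\eps\to0$. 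The pathwise moment estimates on $\Delta X$ and on $\Delta X-\Delta\widehat X$ from Lemma~\ref{lem: moment} remain available, but they must now be paired with the $BMO$-norm of $Z$ rather than with a fixed $L^p$-norm; reconciling these two ingredients, and checking that the limiting first-order condition is unaffected by the auxiliary measure change used to solve for $(P,Q)$, is the technically delicate heart of the argument.
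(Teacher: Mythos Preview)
Your proposal is essentially correct and follows the paper's overall scheme, but two technical points are organised differently and deserve comment.

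First, in Step~2 the paper does \emph{not} linearise the exponential and the Girsanov density separately under $\mathbb P^{\nu^*}$. Instead it writes
\[
J_A(\nu^\eps)=J_A(\nu^*)\,\bar{\mathbb E}^{\nu^\eps}\Big[\exp\Big(-\lambda\int_0^T\big(-\delta c_t+\eps Z_t(h(t)\Delta\hat X_t+\Delta\beta_t)\big)\,\mathrm dt\Big)\Big],
\]
where $\bar{\mathbb P}^{\nu^\eps}:=\cE^{\nu^\eps}(-\lambda Z)\,\mathrm d\mathbb P^{\nu^\eps}$. The whole exponential martingale $\cE(-\lambda Z)$ is absorbed by this measure change \emph{before} any Taylor expansion, so the remaining exponential has integrand linear in~$Z$. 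Your proposal invokes the same measure change, but only to handle the $\lambda Q_tZ_t$ term in the adjoint BSDE; the paper exploits it earlier and more centrally. Either route works, but the paper's organisation avoids having to control the product of $M_T$ with the linearised density.

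Second, the remainder $\cR^\eps_T$ in the paper is not of the form $\int_0^T|\cR^\eps_t|\,\mathrm dt$ inherited from the risk-neutral proof: it is the full Taylor tail $\sum_{n\ge 2}\frac{(-\eps\lambda)^n}{n!}\big(\int_0^T(-\delta c_t+Z_t(h\Delta\hat X_t+\Delta\beta_t))\,\mathrm dt\big)^n$. Bounding this series is the crux, and the paper does it via the \emph{energy inequalities} for BMO martingales (Meyer, \cite[VII-6]{Meyer}):
\[
\bar{\mathbb E}^{\nu^*}\Big[\Big(\int_0^T Z_t^2\,\mathrm dt\Big)^n\Big]\le n!\,\|Z\|_{BMO(\bar{\mathbb P}^{\nu^*})}^{2n},
\]
so that the $n!$ from the Taylor denominator is only partially cancelled by $\sqrt{n!}$, and the series converges. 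Your John--Nirenberg/reverse H\"older heuristic is in the right family, but naming the energy inequality and its factorial growth is what actually closes the argument. Finally, Lemma~\ref{lem: moment} is not needed here: the filtered system is linear, so $\Delta X=\Delta\hat X$ exactly and there is no state-linearisation error to control.
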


\begin{lem}\label{BMO_2}
	Let $ \mathbb{P}^{\nu_1} $ and $ \mathbb{P}^{\nu_2} $ be the two weak control of the Agent and $ Z $ a $ \dbF $-adapted process. If $ \int^{\cdot}_{0}Z_t\mathrm{d}I^{\nu_1}_t\in BMO(\mathbb{P}^{\nu_1}) $, then $ \int^{\cdot}_{0}Z_t\mathrm{d}I^{\nu_2}_t\in BMO(\mathbb{P}^{\nu_2}) $.
\end{lem}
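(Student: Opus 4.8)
The plan is to reduce the statement to the invariance of the space $BMO$ under an equivalent change of measure whose density is the stochastic exponential of a $BMO$ martingale, a fact available in Kazamaki's book \cite{Kazamaki}. The starting point is that the quadratic variation of $\int_0^\cdot Z_t\,\mathrm{d}I^{\nu_i}_t$ equals $\int_0^\cdot Z_t^2\,\mathrm{d}t$ for $i=1,2$, a pathwise object independent of the measure; the two $BMO$ norms therefore differ only through the reference measure in the conditional expectation and through the drift distinguishing $I^{\nu_1}$ from $I^{\nu_2}$. Accordingly, I would first express $I^{\nu_2}$ as a Girsanov transform of $I^{\nu_1}$, then check that the associated density is $\cE\big(\int_0^\cdot\psi_t\,\mathrm{d}I^{\nu_1}_t\big)$ for a \emph{bounded} process $\psi$, and finally invoke Kazamaki's invariance theorem.

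First I would identify the drift $\psi$. Subtracting the observation dynamics \eqref{observeproc_partial} written under the two controls gives $\mathrm{d}I^{\nu_2}_t=\mathrm{d}I^{\nu_1}_t-\psi_t\,\mathrm{d}t$ with $\psi_t=-\big(h(t)(X^1_t-X^2_t)+(\beta^1_t-\beta^2_t)\big)$, where $X^i$ denotes the filter associated with $\nu_i$. The key observation is that the difference $\Delta X:=X^1-X^2$ solves an \emph{ordinary} differential equation: in \eqref{stateequation02_partial} the martingale term $h(t)V(t)\,\mathrm{d}B_t$ has a deterministic coefficient and is driven by the same observation $B$, so it cancels in the difference, leaving $\mathrm{d}\Delta X_t=\big((\eta(t)-h^2(t)V(t))\Delta X_t+(\alpha^1_t-\alpha^2_t)-h(t)V(t)(\beta^1_t-\beta^2_t)\big)\mathrm{d}t$ with $\Delta X_0=0$. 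Since the controls take values in the compact set $A$ and $h,V$ are bounded on $[0,T]$, a Gronwall estimate shows that $\Delta X$, hence $\psi$, is bounded. Consequently $M:=\int_0^\cdot\psi_t\,\mathrm{d}I^{\nu_1}_t\in BMO(\dbP^{\nu_1})$ and, by Girsanov, $\left.\frac{\mathrm{d}\dbP^{\nu_2}}{\mathrm{d}\dbP^{\nu_1}}\right|_{\cF_t}=\cE(M)_t$.

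It then remains to transfer the $BMO$ property. Writing $N:=\int_0^\cdot Z_t\,\mathrm{d}I^{\nu_1}_t\in BMO(\dbP^{\nu_1})$, the Girsanov relation $\mathrm{d}I^{\nu_2}_t=\mathrm{d}I^{\nu_1}_t-\psi_t\,\mathrm{d}t$ yields $N_t-\langle N,M\rangle_t=\int_0^t Z_s\,\mathrm{d}I^{\nu_1}_s-\int_0^t Z_s\psi_s\,\mathrm{d}s=\int_0^t Z_s\,\mathrm{d}I^{\nu_2}_s$. Since $M\in BMO(\dbP^{\nu_1})$, Kazamaki's theorem on the stability of $BMO$ under the change of measure $\cE(M)$ applies and gives $N-\langle N,M\rangle\in BMO(\dbP^{\nu_2})$, that is, $\int_0^\cdot Z_t\,\mathrm{d}I^{\nu_2}_t\in BMO(\dbP^{\nu_2})$, as claimed.

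I expect the main obstacle to lie not in the abstract transfer step but in justifying the boundedness of $\psi$, i.e. in the cancellation of the stochastic term in $\Delta X$; this hinges on the two filters being driven by the same observation with a common deterministic variance $V$. If one prefers to avoid citing the invariance theorem as a black box, the same conclusion can be reached by hand: boundedness of $\psi$ gives a reverse H\"older inequality $(R_p)$ for $\cE(M)$ (again \cite{Kazamaki}), and combining it through Bayes' rule and H\"older's inequality with the energy (John--Nirenberg) inequality for the $BMO$ martingale $N$ directly bounds $\sup_\tau\big\|\dbE^{\nu_2}[\int_\tau^T Z_t^2\,\mathrm{d}t\mid\cF_\tau]\big\|_\infty$ by a constant multiple of $\|N\|_{BMO(\dbP^{\nu_1})}^2$.
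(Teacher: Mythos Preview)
Your proposal is correct and follows essentially the same route as the paper: identify the Radon--Nikodym derivative $\mathrm{d}\dbP^{\nu_2}/\mathrm{d}\dbP^{\nu_1}$ as the stochastic exponential of $\int_0^\cdot\psi_t\,\mathrm{d}I^{\nu_1}_t$ with bounded integrand, and then invoke Kazamaki's stability result (Theorem 3.8 in \cite{Kazamaki}). Your argument is in fact more complete than the paper's, which simply asserts ``since the controls are bounded, the martingale is in BMO'' without spelling out the ODE satisfied by $\Delta X$ and the Gronwall step; your observation that the $h(t)V(t)\,\mathrm{d}B_t$ term cancels in the difference is exactly the point that makes this work.
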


\begin{proof}
	Denote $ X_1 $ and $ X_2 $ as the strong solution of the equation \eqref{filterProcess_partial} given $ \nu_1=(\alpha_1,\beta_1) $ and $ \nu_2=(\alpha_2,\beta_2) $, respectively. Denote $ \Delta X := X_2 - X_1 $ and $ \Delta\beta = \beta_2 - \beta_1 $. The change of measure between $ \mathbb{P}^{\nu_1} $ and $ \mathbb{P}^{\nu_2} $ are given by
	\begin{equation*}
	\mathrm{d}\mathbb{P}^{\nu_2} = \mathcal{E}\left( \int_0^\cdot (h(s)\Delta \hat{X}_s + \Delta\beta_s )\mathrm{d}I^{\nu_1}_s\right)\mathrm{d}\mathbb{P}^{\nu_1}.
	\end{equation*}
	In particular, since the controls are bounded, the martingale $ (\int_{0}^{t}(h(s)\Delta \hat{X}_s + \Delta\beta_t)\mathrm{d}I^{\nu_1}_s)_{t\in[0,T]} $ is in BMO. We conclude by using the Theorem 3.8 in \cite{Kazamaki}.
	\qed
\end{proof}

\no In light of Lemma \ref{BMO_2}, without ambiguity, we say a $ \mathbb{F}^B $-adapted process $ Z\in BMO $ if there exists $ \mathbb{P}^\nu\in\mathcal{P} $ such that $ \int^{\cdot}_{0}Z_t\mathrm{d}I^{\nu}_t\in BMO(\mathbb{P}^{\nu}) $
.

\no Here we introduce a notation: for an $\dbF^B$-adapted process $Z$ satisfying sufficient integrability condition, we denote 
\beaa
\cE^\nu(Z) : = \cE \Big(\int_0^\cd Z_t \mathrm{d} I^\nu_t  \Big).
\eeaa

\no {\bf Proof of Theorem \ref{necessaryCondition_partial}}
	As in the proof of Theorem \ref{necessaryCondition}, we start by assuming the existence of an $ \mathbb{F}^B  $-adapted control $ \nu^{*} $ which optimises the Agent's expected value under a given contract $ \xi $. We denote the corresponding state variable $ \hat{X}^* $.
	
	\no {\it Step 1}.\q Introduce the dynamic version of the value function \begin{equation*}
	\tilde{Y}_{t} := \mathbb{E}^{\nu^*}_t\left[-\exp\left(-\lambda\left(\xi - \int_{t}^{T}c_{s}(\nu^*_s)\mathrm{d}s\right)\right)\right].
	\end{equation*} 
	One can check straightforward that $(M_t)_{0\leq t\leq T}:= \left(\tilde{Y}_{t}\exp\left(\lambda\int_{0}^{t}c_{s}(\nu^*_s)\mathrm{d}s\right)\right)_{0\leq t\leq T}$
	is an $ \mathbb{F}^B $-martingale under the probability $ \mathbb{P}^{\nu^{*}} $, and satisfies the condition \eqref{H} according to Remark \ref{rem:condH}.  By the  martingale representation in Lemma \ref{EMRT}, there exists an $ \mathbb{F}^B  $-adapted process $ \tilde{Z} $, such that for all $ t\in[0,T] $
	\begin{equation}\label{Mdecomp_partial}
	M_t = V_A(\nu^*) + \int_{0}^{t}\tilde{Z}_{s}\mathrm{d}I^{v^{*}}_{s}
	= V_A(\nu^*) + \int_{0}^{t}M_s Z_{s}\mathrm{d}I^{v^{*}}_{s},
	\q\mbox{where}\q  Z_t := -M_t^{-1}\tilde{Z}_t
	\end{equation}
	
	\no Let $ Y_{t} = -\frac{1}{\lambda}\ln(-\tilde{Y}_t) $. Then \eqref{stateequation01_partial} follows from It\^o's formula. Further, in view of \eqref{Mdecomp_partial} we obtain
	\begin{equation*}
	M_t
	=V_A(\nu^*)   \mathcal{E}^{\nu^*}(-\lambda Z).
	\end{equation*}
	Since $M$ satisfies the condition \eqref{H}, we have $ Z \in BMO$ according to Remark \ref{rem:condH} and $ \mathcal{E}^{\nu^*}(-\lambda Z) $ is a uniformly integrable $\dbP^{\nu^*}$-martingale.
	
	\ms
	\no {\it Step 2}. Next we perform a variational calculus around the optimal control $\nu^*$. We shall use the same notation as in the proof of Theorem \ref{necessaryCondition}. Note that $ \Delta \hat{X} $ satisfies the following stochastic differential equation:
	\begin{equation*}
	\mathrm{d}\Delta\hat{X}_t = \left[ (\eta(t)-V(t)h^2(t))\Delta\hat{X}_t + \Delta\alpha_t - V(t)h(t)\Delta\beta_t \right]\mathrm{d}t.
	\end{equation*}
	Since $J_A(\nu^*)=V_A = -e^{-\l Y_0}$,	we have
	\bea
	J_A(\nu^\epsilon) &=&  \mathbb{E}^{\nu^\epsilon}\left[J_A(\nu^*)\mathcal{E}^{\nu^\epsilon}(-\lambda Z)\exp\left(-\lambda\left(\int_{0}^{T}-\delta c_{t}(\nu^*_t) + \epsilon Z_t(h(t)\Delta\hat{X}_t + \Delta\beta_t)\mathrm{d}t\right)\right)\right]. \label{eq:JAexp}
	\eea	
	
	Define $ \mathrm{d}\bar{\mathbb{P}}^{\nu^\epsilon} := \mathcal{E}^{\nu^\epsilon}(-\lambda Z)\mathrm{d}\mathbb{P}^{\nu^\epsilon} $ and denote $ \bar{\mathbb{E}}^{\nu^\epsilon} $ the expectation under $ \bar{\mathbb{P}}^{\nu^\epsilon} $. Applying the Taylor expansion on the exponential function in \eqref{eq:JAexp}, we have
	\beaa
	\frac{J_A(\nu^\epsilon)}{J_A(\nu^*)} 
	&= &\bar{\mathbb{E}}^{\nu^\epsilon}\left[1 - \epsilon\lambda\left(\int_{0}^{T}-\partial_\alpha c_{t}(\nu^*_t)\Delta\alpha_t -\partial_\beta c_{t}(\nu^*_t)\Delta\beta_t + Z_t(h(t)\Delta\hat{X}_t + \Delta\beta_t)\mathrm{d}t\right) + \cR^\eps_T \right],
	\eeaa
	where
	\begin{equation}\label{eq:Reps_partial}
	\cR^\eps_T := (\delta c_{t}(\nu^*_t)-\partial_\alpha c_{t}(\nu^*_t)\delta\alpha_t -\partial_\beta c_{t}(\nu^*_t)\delta\beta_t) + \sum_{n=2}^{\infty}\frac{(-1)^n\epsilon^n\lambda^n}{n!}\left(\int_{0}^{T}-\delta c_{t}(\nu^*_t) + Z_t(h(t)\Delta\hat{X}_t + \Delta\beta_t)\mathrm{d}t\right)^n.
	\end{equation} 
	Define $ \mathrm{d}\bar{\mathbb{P}}^{\nu^*} := \mathcal{E}^{\nu^*}(-\lambda Z)\mathrm{d}\mathbb{P}^{\nu^*} $, $ \bar{\mathbb{E}}^{\nu^*} $ the expectation under $ \bar{\mathbb{P}}^{\nu^*} $, and denote $ \bar{I}^*_t := I^*_t + \int_{0}^{t}\lambda Z_s\mathrm{d}s $. We claim and will prove in {\it Step 3} that $\limsup_{\eps\rightarrow 0} \bar\dbE^{\nu^*}[|\cR^\eps_T|]/\eps^2<\infty$. Then, 
	\begin{equation*}
	\frac{J_A(\nu)}{J_A(\nu^*)} - 1 = \epsilon\lambda\bar{\mathbb{E}}^{\nu^\epsilon}\left[\int_{0}^{T}\big(\partial_\alpha c_{t}(\nu^{*}_{t})\Delta\alpha_t - (Z_t -\partial_\beta c_{t}(\nu^*_t))\Delta\beta_t - Z_t h(t)\Delta\hat{X}_t\big)\mathrm{d}t\right] + \mathcal{O}(\epsilon^2).
	\end{equation*}
	The construction of the costate process $ (P,Q) $ and the variational calculus is the same as step 3 and step 4 of the proof of Theorem \ref{necessaryCondition}. We shall leave them to the readers.
	
	\ms
	\no {\it Step 3}. \q Finally we shall complete the proof by proving the claim we made on $\cR^\eps_T$. 
	The first term in \eqref{eq:Reps_partial} is clearly uniformly bounded by $ C\epsilon^2 $ for some positive constant $ C $, since the controls are bounded.
	%
	Since the controls are bounded, using the H{\"o}lder inequality and the energy inequalities given in \cite[Chapter VII-6]{Meyer}, we have
	\beaa
	\bar\dbE^{\nu^*}[|\cR^\eps_T|] 
	&\leq& C\eps^2+ \sum_{n=2}^{\infty}\frac{\epsilon^n\lambda^n}{n!}\left(C_1^n + C_2^n\bar{\mathbb{E}}^{\nu^*}\left[ \mathcal{E}\left( \int_{0}^{T}\epsilon(h(t)\Delta \hat{X}_t + \Delta\beta_t)\mathrm{d}\bar{I}^*_t \right)\left(\int_{0}^{T}|Z_t|\mathrm{d}t\right)^{n} \right]\right) \\
	&\leq& C\eps^2+\sum_{n=2}^{\infty}\frac{\epsilon^n\lambda^n}{n!}\left(C_1^n + C_2^nC_3\bar{\mathbb{E}}^{\nu^*}\left[\left(\int_{0}^{T}Z^2_t\mathrm{d}t\right)^{n} \right]^{1/2}\right)\\
	&\leq& C\eps^2+\sum_{n=2}^{\infty}\frac{\epsilon^n\lambda^n}{n!}\left(C_1^n + C_2^nC_3\sqrt{n!}||Z||^{n}_{BMO(\bar\dbP^{\nu^*})}\right)
	= \mathcal{O}(\epsilon^2).
	\eeaa
	\qed

\bibliographystyle{plain}

\end{document}